\theoremstyle{definition}
\newtheorem{thm}{Theorem}[section]
\newtheorem{lem}[thm]{Lemma}
\newtheorem{cor}[thm]{Corollary}
\newtheorem{prop}[thm]{Proposition}
\theoremstyle{definition}
\newtheorem{rem}[thm]{Remark}
\newtheorem{condition}[thm]{Condition}
\newtheorem{ex}[thm]{Example}
\numberwithin{equation}{section}
\def\F{{\mathbb F}}
\def\Q{{\mathbb Q}}
\def\R{{\mathbb R}}
\def\Z{{\mathbb Z}}
\def\PP{{\mathbb P}}
\def\Aut{\mathop{\mathrm{Aut}}\nolimits}
\def\Frob{\mathop{\mathrm{Frob}}\nolimits}
\def\Frac{\mathop{\mathrm{Frac}}\nolimits}
\def\Gal{\mathop{\mathrm{Gal}}\nolimits}
\def\Jac{\mathop{\mathrm{Jac}}\nolimits}
\def\Ker{\mathop{\mathrm{Ker}}\nolimits}
\def\Res{\mathop{\mathrm{Res}}\nolimits}
\def\AGL{\mathop{\mathrm{AGL}}\nolimits}
\def\GL{\mathop{\mathrm{GL}}\nolimits}
\def\SL{\mathop{\mathrm{SL}}\nolimits}
\def\deg{\mathop{\text{\rm deg}}\nolimits}
\def\det{\mathop{\mathrm{det}}\nolimits}
\def\chara{\mathop{\mathrm{char}}}
\def\divi{\mathop{\mathrm{div}}}
\newcommand{\transp}[1]{{}^{t}\!{#1}}
\begin{document}

\title[The Hasse principle for finite Galois modules]{The Hasse principle for finite Galois modules allowing exceptional sets of positive density}

\author[Y.\ Ishitsuka]{Yasuhiro Ishitsuka}
\address{
Institute of Mathematics for Industry, Kyushu University,
Fukuoka, 819-0395, Japan}
\email{yishi1093@gmail.com}

\author[T.\ Ito]{Tetsushi Ito}
\address{Department of Mathematics, Faculty of Science, Kyoto University, Kyoto 606--8502, Japan}
\email{tetsushi@math.kyoto-u.ac.jp}

\date{February 17, 2022}

\subjclass[2020]{Primary 11R34; Secondary 14H25, 14H50}
\keywords{Galois cohomology; Hasse principle; Local-global divisibility problem; Flexes on cubic curves.}

\begin{abstract}
We study a variant of the Hasse principle for finite Galois modules,
allowing exceptional sets of positive density.
For a Galois module whose underlying abelian group
is isomorphic to $\F_p^{\oplus r}$ ($r \leq 2$),
we show that the product of the restriction maps for places
in a set of places $S$ is injective if the Dirichlet density of
$S$ is strictly larger than $1 - p^{-r}$.
We give applications to
the local-global divisibility problem for elliptic curves
and the Hasse principle for flexes on plane cubic curves.
\end{abstract}

\maketitle

\section{Introduction}

Let $K$ be a global field (i.e.\ a finite extension of $\Q$ or $\F_p(T)$
for a prime number $p$).
Let $M$ be a finite abelian group equipped with
a continuous action of $\Gal(K^{\mathrm{sep}}/K)$,
where $K^{\mathrm{sep}}$ is a separable closure of $K$.
For simplicity, we call it a \emph{finite Galois module} over $K$.
The \emph{Hasse principle} asks whether the restriction maps
\[
\Res_{M} \colon
H^1(\Gal(K^{\mathrm{sep}}/K), M) \to
\prod_{v \, : \, \text{place of $K$}} H^1(\Gal(K_v^{\mathrm{sep}}/K_v), M)
\]
is injective.
Here $K_v$ is the completion of $K$ at $v$.
Although $\Res_{M}$ is not injective in general,
it is known to be injective for a number of cases.
For example, if the underlying abelian group of $M$ is
is isomorphic to $(\Z/p^s \Z)^{\oplus r}$,
the map $\Res_{M}$ is known to be injective
if one of the following conditions is satisfied.
\begin{condition}
\label{Condition:prs}
\begin{itemize}
\item $p$ is odd, $r = 1$, and $s \geq 1$ is arbitrary.
\item $p$ is arbitrary and $(r, s) = (2, 1)$.
\item $(p, r, s) = (2, 1, 1)$, $(2, 1, 2)$, or $(2, 3, 1)$.
\end{itemize}
\end{condition}
In fact, when $p \neq \chara K$ and  $(r,s) = (2,1)$,
the injectivity of $\Res_{M}$ follows from
\cite[Chapter I, Theorem 4.10 (a)]{Milne}
and \cite[Chapter I, Corollary 9.6]{Milne}.
Other cases are similar and well-known.

In this paper, we shall prove a variant of the above results
allowing exceptional sets of positive density,
following a suggestion by J.-P.\ Serre.
Let $S$ be a set of finite places of $K$.
We may ask whether the product of the restriction maps for places in $S$
\[
\Res_{M,S} \colon
H^1(\Gal(K^{\mathrm{sep}}/K), M) \to
\prod_{v \in S} H^1(\Gal(K_v^{\mathrm{sep}}/K_v), M)
\]
is injective or not.

The main theorem of this paper is as follows.

\begin{thm}
\label{Theorem:GaloisCohomology}
Let $K$ be a global field.
Let $M$ be a finite Galois module over $K$
whose underlying abelian group is isomorphic to $(\Z/p^s \Z)^{\oplus r}$.
Assume that $(p,r,s)$ satisfies one of Condition \ref{Condition:prs},
and the Dirichlet density $\delta(S)$ exists and satisfies
\[ \delta(S) > 1 - p^{-rs}. \]
Then $\Res_{M,S}$ is injective.
\end{thm}

Recall that the \emph{Dirichlet density} of $S$ is defined by
\[
\delta(S) := \lim_{s \to 1^{+}}
\frac{\sum_{v \in S} |\kappa(v)|^{-s}}{\sum_{v \, :\,  \text{finite place of $K$}}  |\kappa(v)|^{-s}}
\]
if it exists.
When $K$ is a number field, we have the same result for the natural density
instead of the Dirichlet density;
see Remark \ref{Remark:density}.

We shall also prove that the above result is optimal.

\begin{thm}
\label{Theorem:Counterexample}
Let $K$ be a global field.
Let $p$ be a prime number, and $r,s \geq 1$ positive integers.
\begin{enumerate}
\item
Assume that $(p,r,s)$ does \emph{not} satisfy one of Condition \ref{Condition:prs}.
Then there is a finite Galois module $M$ over $K$ such that
$M \cong (\Z/p^s \Z)^{\oplus r}$ as an abelian group,
and
\[
\Res_{M} \colon
H^1(\Gal(K^{\mathrm{sep}}/K), M) \to
\prod_{v \,:\, \text{place of}\ K} H^1(\Gal(K_v^{\mathrm{sep}}/K_v), M)
\]
is \emph{not} injective.

\item
Assume that $(p,r,s)$ satisfies one of Condition \ref{Condition:prs}.
Then there is a finite Galois module $M$ over $K$
and a set $S$ of places of $K$ such that
$M \cong (\Z/p^s \Z)^{\oplus r}$ as an abelian group,
$\delta(S) = 1 - p^{-rs}$,
and $\Res_{M,S}$ is \emph{not} injective.
\end{enumerate}
\end{thm}

Theorem \ref{Theorem:Counterexample} (1) means
the assumptions on $(p,r,s)$ cannot be weakened in general.
Theorem \ref{Theorem:Counterexample} (2) means
the lower bound ``$\delta(S) > 1 - p^{-rs}$'' cannot be improved
in general.

We shall give two applications.
As a first application, we shall show the following corollary
on the local-global divisibility problem.
(Here we only state the results when $p$ is odd.
The case $p = 2$ is similar and omitted.)

\begin{cor}
\label{Corollary:LocalGlobalDivisibility}
Let $K$ be a global field.
Let $p$ be an odd prime number with $p \neq \chara K$.
Let $G$ be an elliptic curve or an algebraic torus of dimension $\leq 2$ over $K$.
Let $r,s \geq 1$ be positive integers
satisfying the following conditions.
\begin{itemize}
\item If $G$ is an elliptic curve or an algebraic torus of dimension $2$, then $(r,s) = (2,1)$.
\item If $G$ is an algebraic torus of dimension $1$, then $r = 1$ and $s$ is arbitrary.
\end{itemize}
Let $Q \in G(K)$ be a $K$-rational point.
Let $S$ be a set of finite places of $K$ satisfying $\delta(S) > 1 - p^{-rs}$.
If $Q \in p^s G(K_v)$ for every $v \in S$,
then $Q \in p^s G(K)$.
\end{cor}

Recall that Kummer theory gives embeddings
\begin{align*}
G(K)/p^s G(K) &\hookrightarrow H^1(\Gal(K^{\mathrm{sep}}/K), G[p^s](K^{\mathrm{sep}})), \\
G(K_v)/p^s G(K_v) &\hookrightarrow H^1(\Gal(K_v^{\mathrm{sep}}/K_v), G[p^s](K_v^{\mathrm{sep}})).
\end{align*}
Therefore, Corollary \ref{Corollary:LocalGlobalDivisibility}
immediately follows from Theorem \ref{Theorem:GaloisCohomology}.

\begin{rem}
Usually, the local-global divisibility problem
is studied for a set $S$ of places
with Dirichlet density $1$ (i.e.\ $\delta(S) = 1$).
In this case, Corollary \ref{Corollary:LocalGlobalDivisibility}
is well-known.
When $G = \mathbb{G}_m$ is the multiplicative group,
it is known as the Grunwald--Wang theorem
\cite[Chapter IX, Theorem 9.1.11]{NeukirchSchmidtWingberg}.
When $G$ is an elliptic curve,
it is proved by Dvornicich--Zannier \cite[Theorem 3.1]{DvornicichZannier}
and Wong \cite[Theorem 1]{Wong}.
\end{rem}

As a second application,
we study the Hasse principle for flexes on cubic curves.
Note that the Hasse principle for flexes follows easily from
the classical results in the literature (e.g., \cite{Milne}).
In fact, for a smooth cubic curve $C \subset \PP^2$ over a global field $K$ with $\chara K \neq 3$, the flexes on $C$ give
an element of the Galois cohomology
$H^1(\Gal(K^{\mathrm{sep}}/K), M)$
for $M = \Jac(C)[3](K^{\mathrm{sep}})$.
Since
$M \cong \F_3^{\oplus 2}$ as an abelian group,
the map $\Res_{M}$ is injective
by \cite[Chapter I, Theorem 4.10 (a)]{Milne}, \cite[Chapter I, Corollary 9.6]{Milne}.
The Hasse principle for flexes follows.
We shall improve this result using Theorem \ref{Theorem:GaloisCohomology},
and prove that a smooth cubic curve over a global field $K$ with $\chara K \neq 3$ has a flex over $K$ if and only if
it has a flex locally at a set of places of Dirichlet density
strictly larger than $8/9$.
We shall also construct explicit examples of cubic curves attaining the bound ``$8/9$.''
We also prove similar results over global fields of characteristic $3$.
See Section \ref{Section:Flex} for details.

\begin{rem}
\label{Remark:Serre}
In the first
version of this paper\footnote{\url{https://arxiv.org/abs/1807.09909v1}},
we gave a proof of the Hasse principle for flexes
based on a computer calculation on subgroups of $\AGL_2(\F_3)$.
The proof in the current version
is due to J.-P.\ Serre,
which does not require computer calculation.
He kindly suggested to study
the Hasse principle allowing exceptional sets of positive density.
Theorem \ref{Theorem:GaloisCohomology}
and Theorem \ref{Theorem:Counterexample}
of this paper answer
the question raised by him.
\end{rem}

Let us explain the outline of the proof.
Our proof of Theorem \ref{Theorem:GaloisCohomology}
and
Theorem \ref{Theorem:Counterexample}
is of group theoretic nature.
Let
\[ \rho \colon \Gal(K^{\mathrm{sep}}/K) \to \Aut(M) \cong \GL_r(\Z/p^s \Z) \]
be the continuous homomorphism corresponding to $M$.
An element
\[ \alpha \in H^1(\Gal(K^{\mathrm{sep}}/K), M) \]
corresponds to a $1$-cocycle
\[ \widetilde{\alpha} \colon \Gal(K^{\mathrm{sep}}/K) \to M \cong (\Z/p^s \Z)^{\oplus r}. \]
It satisfies the cocycle condition
$\widetilde{\alpha}(g_1 g_2) = \widetilde{\alpha}(g_1) + g_1 \cdot \widetilde{\alpha}(g_2)$.
For each $g \in \Gal(K^{\mathrm{sep}}/K)$,
we have an affine transformation on $(\Z/p^s \Z)^{\oplus r}$ given by
\[ (\Z/p^s \Z)^{\oplus r} \to (\Z/p^s \Z)^{\oplus r}, \quad x \mapsto \rho(g) \cdot x + \widetilde{\alpha}(g). \]
A direct calculation shows the map
\[ \psi \colon \Gal(K^{\mathrm{sep}}/K) \to \AGL_r(\Z/p^s \Z) := (\Z/p^s \Z)^{\oplus r} \rtimes \GL_r(\Z/p^s \Z) \]
defined by $\psi(g) := (\widetilde{\alpha}(g),\,\rho(g))$
is a group homomorphism.
The element $\alpha$ is trivial if and only if
the action of $\psi(g)$ on $(\Z/p^s \Z)^{\oplus r}$
(as an affine transformation)
has a fixed point; see Lemma \ref{Lemma:AffineFixedPoints}.
Therefore, we can interpret
the assertion of Theorem \ref{Theorem:GaloisCohomology}
in terms of group theoretic properties of
the image $\psi(\Gal(K^{\mathrm{sep}}/K))$.
We shall prove it using the Chebotarev density theorem.

The outline of this paper is as follows.
In Section \ref{Section:GroupTheoreticProperties},
we prove group theoretic results on affine transformation groups.
In Section \ref{Section:ProofMainTheorem},
we prove Theorem \ref{Theorem:GaloisCohomology}.
In Section \ref{Section:Counterexample},
we prove Theorem \ref{Theorem:Counterexample}.
In Section \ref{Section:Flex},
we give an application to the Hasse principle for flexes on cubic curves.
Finally, in Section \ref{Section:FlexHessian},
we summarize results on flexes and Hessian curves.

\begin{rem}
Condition \ref{Condition:prs} is related to the cyclicity
of a Sylow $p$-subgroup of $\GL_r(\F_p)$;
see the proof of Proposition \ref{Proposition:AffineTransformation}.
For any $p$, any Sylow $p$-subgroup of $\GL_2(\F_p)$ is cyclic of order $p$.
When $p$ is odd,
the Sylow $p$-subgroup of $(\Z/p^s \Z)^{\times}$ is cyclic for any $s \geq 1$; it is trivial when $s = 1$.
When $p = 2$,
the Sylow $2$-subgroup of $(\Z/2^s \Z)^{\times}$ is cyclic
if and only if $s \leq 2$; it is trivial when $s = 1$.
The case $(p,r,s) = (2,3,1)$ is exceptional;
although a Sylow $2$-subgroup of $\GL_3(\F_2)$ is not cyclic,
the Hasse principle holds.
(Dvornicich--Zannier observed this fact; see \cite[Remark 3.6]{DvornicichZannier}.)
\end{rem}

\section{Group theoretic results}
\label{Section:GroupTheoreticProperties}

\subsection{Affine transformation groups}

In this section, we give certain group theoretic results on
affine transformation groups.

Let $R$ be a commutative ring, and $R^{\oplus r}$
a free $R$-module of rank $r \geq 1$.
Let $e_1,\ldots,e_r \in R^{\oplus r}$ be the standard basis.
Let $\AGL_r(R)$ be the affine transformation group of $R^{\oplus r}$.
It is a semi-direct product of the general linear group $\GL_r(R)$
and the group of translations $R^{\oplus r}$, i.e.,
we have
\[ \AGL_r(R) := R^{\oplus r} \rtimes \GL_r(R). \]
The projection to the second factor is denoted by
\[ \pi \colon \AGL_r(R) \to \GL_r(R). \]
Let $g_{v,m} \in \AGL_r(R)$ be the element corresponding to
a pair $(v,m) \in R^{\oplus r} \rtimes \GL_r(R)$.
For $g_{v,m}, g_{v',m'} \in \AGL_r(R)$,
the multiplication is given by
$g_{v,m} \cdot g_{v',m'} = g_{v + m(v'),\, m m'}$.
The action of $g_{v,m}$ on $w \in R^{\oplus r}$ is given by
$g_{v,m} \cdot w = v + m(w)$.
We say an element $g \in \AGL_r(R)$ is \emph{fixed point free}
if the action of $g$ on $R^{\oplus r}$
has no fixed point.
An element $g_{v,m} \in \AGL_r(R)$ is fixed point free if and only if $v \notin \mathrm{Im}(I_r - m)$,
where $I_r \in \GL_r(R)$ is the identity matrix.

For a finite ring $R$ (e.g., $R = \Z/p^s \Z$)
and a subgroup $G \subset \AGL_r(R)$,
the proportion of fixed point free elements in $G$ is denoted by
\[ d_G := | \{ g \in G \mid g \ \text{is fixed point free} \}|/|G|. \]
If the natural action of $G$ on $(\Z/p^s \Z)^{\oplus r}$
has a fixed point,
then $d_G = 0$.
But the converse does not hold in general.
Nevertheless, we have the following results.

\begin{prop}
\label{Proposition:AffineTransformation}
Assume that $(p,r,s)$ satisfies one of Condition \ref{Condition:prs}.
Let $G \subset \AGL_r(\Z/p^s \Z)$ be a subgroup with $d_G = 0$.
Then the natural action of $G$ on $(\Z/p^s \Z)^{\oplus r}$
has a fixed point.
\end{prop}

If $(p,r,s) = (2,3,1)$,
the assertion can be verified by \texttt{GAP}.
(For a proof which does not require computer verification,
see Remark \ref{Remark:p=2}.)

In the following, we shall only consider the case $r \leq 2$.
In this case, Condition \ref{Condition:prs} implies
any $p$-Sylow subgroup of $\GL_r(\Z/p^s \Z)$ is cyclic.

There are two actions of $\AGL_r(\Z/p^s\Z)$
on $(\Z/p^s\Z)^{\oplus r}$.
One is the natural action as affine transformations.
The other one is the action via the projection $\pi$.
We call the latter the \emph{linear action}.
In order to distinguish them,
the free $\Z/p^s\Z$-module
$(\Z/p^s\Z)^{\oplus r}$ equipped with the linear action of $\AGL_r(\Z/p^s\Z)$
is denoted by $(\Z/p^s\Z)^{\oplus r}_{\mathrm{lin}}$.

\begin{lem}
\label{Lemma:AffineFixedPoints}
Let $G \subset \AGL_r(\Z/p^s\Z)$ be a subgroup. 
\begin{enumerate}
\item 
The composition of the embedding $G \hookrightarrow \AGL_r(\Z/p^s\Z)$ and the projection to the first factor
\[ \AGL_r(\Z/p^s\Z) = (\Z/p^s\Z)^{\oplus r} \rtimes \GL_r(\Z/p^s\Z) \to (\Z/p^s\Z)^{\oplus r}_{\mathrm{lin}} \]
is a 1-cocycle if we consider the linear action of $G$.

\item
Let $\alpha_G \in H^1(G, (\Z/p^s\Z)^{\oplus r}_{\mathrm{lin}})$ be the element in
group cohomology given by the $1$-cocycle in (1).
The element $\alpha_G$ is trivial
if and only if the natural action of $G$ on
$(\Z/p^s\Z)^{\oplus r}$ has a fixed point.
\end{enumerate}
\end{lem}

\begin{proof}
The first assertion can be checked by a direct calculation.
For the second assertion, $\alpha_G$ is trivial if and only if
there exists an element $w \in (\Z/p^s\Z)^{\oplus r}$ satisfying
$v = m(w) - w$ for every $g_{v,m} \in \AGL_r(\Z/p^s\Z)$.
This condition is equivalent
to the assertion that $-w$ is fixed by $G$
with respect to the natural action on $(\Z/p^s\Z)^{\oplus r}$.
\end{proof}

\begin{lem}
\label{Lemma:SylowFixedPoints}
Let $G \subset \AGL_r(\Z/p^s\Z)$ be a subgroup,
and $G_p \subset G$ a Sylow $p$-subgroup.
Assume that the natural action of $G_p$ on $(\Z/p^s\Z)^{\oplus r}$ has a fixed point.
Then the natural action of $G$ on $(\Z/p^s\Z)^{\oplus r}$ has a fixed point.
\end{lem}

\begin{proof}
Since the multiplication by $[G : G_p]$ on
$(\Z/p^s\Z)^{\oplus r}_{\mathrm{lin}}$
is an isomorphism of $G$-modules,
the restriction map
\[ H^1(G, (\Z/p^s\Z)^{\oplus r}_{\mathrm{lin}}) \to H^1(G_p, (\Z/p^s\Z)^{\oplus r}_{\mathrm{lin}}) \]
is injective.
Hence the assertion follows from Lemma \ref{Lemma:AffineFixedPoints}.
\end{proof}

\begin{lem}
\label{Lemma:AffineTransformationInAppendix}
Let $G \subset \AGL_r(\Z/p^s\Z)$ be a subgroup.
Assume that the following conditions are satisfied:
\begin{itemize}
\item The Sylow $p$-subgroup of $\pi(G)$ is cyclic.
\item For every element $g \in G$ whose order is a power of $p$, the natural action of $g$ on $(\Z/p^s\Z)^{\oplus r}$ has a fixed point.
\end{itemize}
Then the natural action of $G$ on $(\Z/p^s\Z)^{\oplus r}$ has a fixed point.
\end{lem}

\begin{proof}
We shall first show the restriction of $\pi$ to $G$ is injective.
To see this, take a non-trivial element
$g \in G \cap (\Ker \pi)$.
The natural action of $g$ on $(\Z/p^s\Z)^{\oplus r}$ is given by the translation by a non-zero vector.
Hence the order of $g$ is a power of $p$,
which contradicts the second assumption.
Therefore, the restriction of $\pi$ to $G$ is injective.

Let $G_p \subset G$ be a Sylow $p$-subgroup.
By the injectivity of the restriction of $\pi$ to $G_p$
and the first assumption, we see that $G_p$ is cyclic.
By the second assumption, the natural action of $G_p$ on $(\Z/p^s\Z)^{\oplus r}$ has a fixed point.
By Lemma \ref{Lemma:SylowFixedPoints},
the natural action of $G$ on $(\Z/p^s\Z)^{\oplus r}$ has a fixed point.
\end{proof}

\begin{proof}[\textit{\textbf{Proof of Proposition \ref{Proposition:AffineTransformation} (Serre)}}]
By Lemma \ref{Lemma:AffineTransformationInAppendix},
it is enough to show that the Sylow $p$-subgroup of $\pi(G)$ is cyclic.

If $r = 1$, the Sylow $p$-subgroup of $(\Z/p^s \Z)^{\times}$ is cyclic. (Recall that we are assuming $s \leq 2$ when $p = 2$.)

If $r = 2$, the Sylow $p$-subgroup of $\GL_2(\F_p)$ is cyclic (of order $p$).
\end{proof}

The following result is a key to prove
the Hasse principle allowing exceptional sets of positive density.

\begin{prop}
\label{Proposition:AffineTransformation2}
Assume that $(p,r,s)$ satisfies one of Condition \ref{Condition:prs}.
Let $G \subset \AGL_r(\Z/p^s \Z)$ be a subgroup.
If $d_G > 0$, then $d_G \geq p^{-rs}$.
\end{prop}

If $p = 2$, it is easy to verify (using \texttt{GAP})
the assertion of Proposition \ref{Proposition:AffineTransformation2}
holds for every subgroup of $\AGL_1(\F_2)$, $\AGL_1(\Z/4 \Z)$, $\AGL_2(\F_2)$, and $\AGL_3(\F_2)$.

Therefore, in the rest of this section, we may assume $p$ is odd.
We shall prove
Proposition \ref{Proposition:AffineTransformation2} for subgroups of $\AGL_1(\Z/p^s \Z)$ and $\AGL_2(\F_p)$.

\begin{proof}[\textit{\textbf{Proof of Proposition \ref{Proposition:AffineTransformation2} (for subgroups of $\AGL_1(\Z/p^s \Z)$)}}]
Let
\[ G \subset \AGL_1(\Z/p^s \Z) \]
be a subgroup with $d_G > 0$.

If $G \cap (\Ker \pi)$ is trivial,
the restriction of $\pi$ to $G$ is injective.
Hence we have
\[ d_G \geq \frac{1}{|G|} \geq \frac{1}{|(\Z/p^s \Z)^{\times}|}
   = \frac{1}{p^{s-1}(p-1)} > p^{-s}. \]

If $G \cap (\Ker \pi)$ is non-trivial,
we put
$t := |G \cap (\Ker \pi)| > 1$.
Then we have  $|G| = t \cdot |\pi(G)|$, and
there are at least $t - 1$
fixed point free elements in $G$.
Since $G \cap (\Ker \pi)$ is a $p$-group, we have $t \geq p$.
Hence we have
\[
  d_G \geq \frac{t - 1}{|G|}
  = \frac{t - 1}{t} \cdot \frac{1}{|\pi(G)|}
  \geq \frac{p - 1}{p} \cdot \frac{1}{|\pi(G)|}.
\]
Since $|\pi(G)| \leq |(\Z/p^s \Z)^{\times}| = (p-1) p^{s-1}$,
we have
\[
  d_G \geq \frac{p - 1}{p} \cdot \frac{1}{(p-1) p^{s-1}}
  = p^{-s}.
\]
\end{proof}

In the rest of this subsection,
we shall consider subgroups of $\AGL_2(\F_p)$.

\begin{lem}
\label{Lemma:LargeCentralizer}
Let $G \subset \AGL_2(\F_p)$ be a subgroup.
If there exists a fixed point free element $g \in G$ with
$|\mathrm{Cent}_{\AGL_2(\F_p)}(g)| \leq p^2$,
then we have $d_G \geq p^{-2}$.
\end{lem}

\begin{proof}
The assertion follows from the following inequalities:
\[
d_G \geq \frac{|\mathrm{Conj}_G(g)|}{|G|}
= \frac{1}{|\mathrm{Cent}_{G}(g)|}
\geq \frac{1}{|\mathrm{Cent}_{\AGL_2(\F_p)}(g)|}
\geq p^{-2}.
\]
(Here the conjugacy class of $g$ is denoted by $\mathrm{Conj}_G(g)$,
and the centralizer of $g$ in $G$ is denoted by $\mathrm{Cent}_{G}(g)$.)
\end{proof}

We omit the proof of the following lemma
because it can be checked by a straightforward calculation.

\begin{lem}
\label{Lemma:FixedPoint}
\begin{enumerate}
\item Let $m \in \GL_2(\F_p)$ be an element such that $1$ is not an eigenvalue of $m$.
For every $v \in \F_p^{\oplus 2}$, the element $g_{v,m}$ has a fixed point.

\item 
For $v \in \F_p^{\oplus 2}$,
the element $g_{v,I_2}$ is fixed point free if and only if $v \neq 0$.

\item Let $m \in \GL_2(\F_p)$ be a non-identity semisimple matrix
such that $1$ is an eigenvalue of $m$.
There exist $p(p-1)$ elements $v \in \F_p^{\oplus 2}$
such that $g_{v,m}$ is fixed point free.
For such an element $g_{v,m}$, we have
\[ |\mathrm{Cent}_{\AGL_2(\F_p)}(g_{v,m})| = p(p-1). \]

\item Let $m \in \GL_2(\F_p)$ be a  non-identity unipotent matrix.
There are $p(p-1)$ elements $v \in \F_p^{\oplus 2}$
such that $g_{v,m}$ is fixed point free.
For such an element $g_{v,m}$, we have
\[ |\mathrm{Cent}_{\AGL_2(\F_p)}(g_{v,m})| = p^2. \]
\end{enumerate}
\end{lem}

\begin{proof}[\textit{\textbf{Proof of Proposition \ref{Proposition:AffineTransformation2} (for subgroups of $\AGL_2(\F_p)$)}}]
Let $G \subset \AGL_2(\F_p)$ be a subgroup with $d_G > 0$.
We want to show $d_G \geq p^{-2}$.

Take a fixed point free element $g_{v,m} \in G$.
Then $1$ is an eigenvalue of $m$ by Lemma \ref{Lemma:FixedPoint} (1).

\begin{itemize}
\item If $m$ is a non-identity semisimple matrix, then
\[ |\mathrm{Cent}_{\AGL_2(\F_p)}(g_{v,m})| = p(p-1) \leq p^2 \]
by Lemma \ref{Lemma:FixedPoint} (3).
Hence $d_G \geq p^{-2}$ by Lemma \ref{Lemma:LargeCentralizer}. 

\item If $m$ is a non-identity unipotent matrix, then
\[ |\mathrm{Cent}_{\AGL_2(\F_p)}(g_{v,m})| = p^2 \]
by Lemma \ref{Lemma:FixedPoint} (4).
Hence $d_G \geq p^{-2}$ by Lemma \ref{Lemma:LargeCentralizer}. 
\end{itemize}

Therefore, by Lemma \ref{Lemma:FixedPoint} (2),
it remains to consider the case where
every fixed point free element of $G$
is of the form $g_{v,I_2}$
for some $v \neq 0$.
The element $g_{v,I_2} \ (v \neq 0)$ acts on $\F_p^{\oplus 2}$ as a non-trivial translation by $v$.
Since $d_G > 0$, there exists at least one such element in $G$.
In particular, the restriction $\pi|_G$ of $\pi \colon \AGL_2(\F_p) \to \GL_2(\F_p)$ to $G$ is
not injective.
Since $\Ker \pi \cong \F_p^{\oplus 2}$,
we have
\[
  |\Ker(\pi|_G)| = p^2
  \quad \text{or} \quad
  |\Ker(\pi|_G)| = p.
\]

First, we consider the case $|\Ker(\pi|_G)| = p^2$.
In this case, the action of $G$ on $\F_p^{\oplus 2}$ is transitive.
By a theorem of Cameron-Cohen \cite[p.136]{CameronCohen} (see also \cite[Theorem 5]{Serre:Jordan}),
we have $d_G \geq p^{-2}$.

It remains to consider the case $|\Ker(\pi|_G)| = p$.
In this case, after changing the coordinates if necessary,
we may assume
$g_{e_1,I_2} \in \Ker(\pi|_G)$.
Then we have 
\[
    \Ker(\pi|_G) =  \langle g_{e_1, I_2} \rangle,
\]
i.e. the group $\Ker(\pi|_G)$ is generated by $g_{e_1, I_2}$.
The $p - 1$ elements
$g_{a e_1, I_2} \ (1 \leq a \leq p - 1)$ are fixed point free.
Hence we have
$d_G \geq (p-1)/|G|$.
Therefore,
in order to prove $d_G \geq p^{-2}$,
it is sufficient to prove
\[
  |G| \leq (p - 1) p^2.
\]

Let $B_0 \subset \GL_2(\F_p)$ (resp.\ $T_0 \subset \GL_2(\F_p)$) be the subgroup of upper triangular (resp.\ diagonal) matrices.
We put
\[
  B := \pi^{-1}(B_0) = \F_p^{\oplus 2} \rtimes B_0,
  \quad
  T := \pi^{-1}(T_0) = \F_p^{\oplus 2} \rtimes T_0.
\]
Then $B$ is the normalizer of $\langle g_{e_1, I_2} \rangle$
in $\AGL_2(\F_p)$.
Since $\langle g_{e_1, I_2} \rangle = \Ker(\pi|_G)$
is a normal subgroup of $G$,
we see that $G$ is a subgroup of $B$.
We consider the following homomorphism
\[
  \chi \colon T_0 \to \F_p^{\times},
  \quad
  \begin{pmatrix} b_1 & 0 \\ 0 & b_2 \end{pmatrix} \mapsto b_1.
\]
Let $H$ be the kernel of the map
\[ G \cap T \to \F_p^{\times},\quad g_{v,m} \mapsto \chi(m).
\]
We have the following sequence of subgroups of $G$:
\[
G \ \supset \ (G \cap T)
\ \supset \ H
\ \supset \ \Ker(\pi|_G) = \langle g_{e_1, I_2} \rangle.
\]

Since $G$ is a subgroup of $B$, we have
\[
  [G : G \cap T] \leq [B : T] = [B_0 : T_0] = p.
\]
Since $H$ is the kernel of the homomorphism $G \cap T \to \F_p^{\times}$,
we have
\[
  [G \cap T : H] \leq |\F_p^{\times}| = p - 1.
\]
Since $|\langle g_{e_1, I_2} \rangle| = p$,
in order to prove $|G| \leq (p-1) p^2$,
it is sufficient to prove
$H = \langle g_{e_1, I_2} \rangle$.

We shall prove $H = \langle g_{e_1, I_2} \rangle$.
Take an element $g_{v',m'} \in H$.
We put
\[
  v' = a'_1 e_1 + a'_2 e_2,
  \quad
  m' = \begin{pmatrix} 1 & 0 \\ 0 & b'_2 \end{pmatrix}
\]
for some $a'_1, a'_2 \in \F_p$ and $b'_2 \in \F_p^{\times}$.

If $m' = I_2$, then $g_{v',I_2} \in \Ker(\pi|_G) = \langle g_{e_1, I_2} \rangle$.

Assume $m' \neq I_2$.
We shall show this case cannot happen.
\begin{itemize}
\item 
If $a'_1 \neq 0$, then
$g_{v',m'}$ is fixed point free.
This cannot happen because we are assuming every fixed point free element of $G$
is of the form $g_{v,I_2}$
for some $v \neq 0$.

\item
If $a'_1 = 0$, we consider the product
\[
  g_{e_1,I_2} \cdot g_{v',m'} = g_{e_1+v', m'} \in H.
\]
Since
$e_1 + v' = e_1 + a'_2 e_2$,
this cannot happen by the same reason as above.
\end{itemize}
\end{proof}


\subsection{Examples of subgroups of $\AGL_r(\Z/p^s \Z)$}

Here we give subgroups of the following groups
\begin{itemize}
\item $\AGL_1(\Z/2^s\Z)$ for $s \geq 3$,
\item $\AGL_2(\Z/p^s\Z)$ for $s \geq 2$,
\item $\AGL_r(\F_p)$ for $p \geq 3$ and $r \geq 3$,
\item $\AGL_r(\F_2)$ for $r \geq 4$
\end{itemize}
such that $d_G = 0$, but
the natural action of $G$ on $(\Z/p^s \Z)^{\oplus r}$
has no fixed point.
Hence the assumptions on the triple $(p,r,s)$
in Proposition \ref{Proposition:AffineTransformation}
cannot be weakened.

\begin{ex}
\label{Example:HassePrincipleFail}
\begin{enumerate}
\item Assume that $s \geq 3$.
Then we put
\[ G_1 := \{ (0,1),\ (0,-1),\ (2^{s-1}, 1 + 2^{s-1}),\ (2^{s-1}, - 1 + 2^{s-1}) \}. \]
It is a subgroup of $\AGL_1(\Z/2^s\Z)$ for $s \geq 3$.

\item Assume that $s \geq 2$.
For $a, b \in \Z/p^s\Z$, we define
$m'_{a,b} \in \GL_2(\Z/p^s\Z)$ by
\begin{align*}
  m'_{a,b}(e_1) &= (1 + p^{s-1} a)e_1 + p^{s-1} b e_2, \\
  m'_{a,b}(e_2) &= (1 + p^{s-1} a)e_2.
\end{align*}
We put
$G_2 := \{ (p^{s-1} b e_1, m'_{a,b}) \}_{a,b \in \Z/p\Z}$.
It is a subgroup of $\AGL_2(\Z/p^s\Z)$
for $s \geq 2$.

\item Assume $p \geq 3$ and $r \geq 3$.
For $a,b \in \F_p$, we define
$h_{a,b} \in \GL_r(\F_{p})$ by
\begin{align*}
    h_{a,b}(e_i) &= e_i \quad (i \neq 2,3), \\
    h_{a,b}(e_2) &= ae_1 + e_2, \\
    h_{a,b}(e_3) &= (b + a(a+1)/2) e_1 + ae_2 + e_3.
\end{align*}
We put $G_3: = \{ (be_1, h_{a,b}) \}_{a,b \in \F_p}$.
It is a subgroup of $\AGL_r(\F_p)$
for $p \geq 3$ and $r \geq 3$.

\item Assume $p=2$ and $r \geq 4$.
We define $h'_1, h'_2 \in \GL_r(\F_2)$ by
\begin{align*}
    h'_1(e_i) &= e_i \quad (i \neq 1), \\
    h'_1(e_1) &= e_1 + e_3, \\
    h'_2(e_i) &= e_i \quad (i \neq 1,2), \\
    h'_2(e_1) &= e_1 + e_4, \\
    h'_2(e_2) &= e_2 + e_3.
\end{align*}
We put
$G_4 := \{ (0,I_r),\,(0,h'_1),\,(e_4,h'_2),\,(e_4,h'_1h'_2) \}$.
It is a subgroup of $\AGL_r(\F_2)$ for $r \geq 4$.
\end{enumerate}
It is easy to see that, for every $i \in \{ 1,2,3,4 \}$,
we have $d_{G_i} = 0$, but there does not exist
an element of $(\Z/p^s\Z)^{\oplus r}$ fixed by $G_i$. 
Note that all the subgroups $G_i$ constructed above are solvable.
\end{ex}

Next, for any triple $(p,r,s)$ as in
Proposition \ref{Proposition:AffineTransformation2},
we give a subgroup
\[ G \subset \AGL_r(\Z/p^s \Z) \]
with $d_G = p^{-rs}$.
Hence the lower bound ``$d_G  \geq p^{-rs}$''
in Proposition \ref{Proposition:AffineTransformation2}
is optimal.

\begin{ex}
\label{Example:SmallProportion}
Let $p$ be a prime number, and $r,s \geq 1$ positive integers.
For $a_1,\ldots,a_r \in \Z/p^s \Z$,
we define a square matrix $b_{a_1,\ldots,a_r}$ of size $r$ by
\[
(b_{a_1,\ldots,a_r})_{ij}
:=
\begin{cases}
a_j & i = 1 \\
\delta_{ij} & 2 \leq j \leq r,
\end{cases}
\]
where $\delta_{ij}$ is Kronecker's delta.
Let $G \subset \AGL_r(\Z/p^s \Z)$ be the subgroup generated by the following elements:
\begin{itemize}
\item $(t e_1, I_r)$ for $t \in p^{s-1}\Z/p^s \Z$.
\item $(0, b_{a_1,\ldots,a_r})$ for
$a_1 \in (\Z/p^s \Z)^{\times}$ and
$a_2,\ldots,a_r \in \Z/p^s \Z$.
\end{itemize}
Then, $G$ is a solvable group of order $(p-1)p^{rs}$.
The $p-1$ elements of the form $(t e_1, I_r)$ for $t \neq 0$
are fixed point free.
Any other element has a fixed point on
$(\Z/p^s\Z)^{\oplus r}$.
Hence we have $d_G = p^{-rs}$.
\end{ex}

\begin{rem}
There are other types subgroups
$G \subset \AGL_r(\Z/p^s \Z)$ with $d_G = p^{-rs}$.
Here we give another example when $s = 1$.
Fix an embedding
$\varphi \colon \F_{p^r}^{\times} \hookrightarrow \GL_r(\F_p)$.
Then
$G' := \F_p^{\oplus r} \rtimes \varphi(\F_{p^r}^{\times})$
is a subgroup of $\AGL_r(\F_p)$ with $d_{G'} = p^{-r}$.
\end{rem}

\begin{rem}
\label{Remark:p=2}
Here we give another proof of
Proposition \ref{Proposition:AffineTransformation}
when $(p,r,s) = (2,3,1)$.
The following proof does not require computer verification.
Let $G \subset \AGL_3(\F_2)$ be a subgroup.
By Lemma \ref{Lemma:AffineTransformationInAppendix},
we may assume $G$ is a $2$-group,
and $\pi(G)$ is contained in the subgroup of upper
triangular matrices in $\GL_3(\F_2)$.
If $\pi(G)$ is cyclic, the assertion follows by Lemma \ref{Lemma:AffineTransformationInAppendix}.
If $\pi(G)$ is not cyclic,
it can be checked by hand that
$\pi(G)$ is equal to one of the following groups
\begin{align*}
G_1 &:= \{\, u_{a,b,c} \mid a,b,c \in \F_2 \,\}, \\
G_2 &:= \{\, u_{a,b,0} \mid a,b \in \F_2 \,\}, \\
G_3 &:= \{\, u_{0,b,c} \mid b,c \in \F_2 \,\}.
\end{align*}
Here we put
$u_{a,b,c} := \begin{pmatrix} 1 & a & b \\ 0 & 1 & c \\ 0 & 0 & 1 \end{pmatrix}$.
It can be checked directly that
if $\pi(G)$ is equal to one of $G_1,G_2,G_3$,
the natural action of $G$ on $\F_2^{\oplus 3}$ has a fixed point.
\end{rem}

\section{Proof of Theorem \ref{Theorem:GaloisCohomology}}
\label{Section:ProofMainTheorem}

We first recall the statement of the Chebotarev density theorem.

Let $K$ be a global field.
For a finite place $v$ of $K$,
the (arithmetic) Frobenius element at $v$
is defined by $\Frob_v(x) := x^{|\kappa(v)|}$ for $x \in \kappa(v)^{\mathrm{sep}}$.
The kernel of a canonical homomorphism
$\Gal(K_v^{\mathrm{sep}}/K_v) \to \Gal(\kappa(v)^{\mathrm{sep}}/\kappa(v))$
is called the \emph{inertia group} at $v$,
which is denoted by $I_v$.

Let $G$ be a finite group, and 
$\rho \colon \Gal(K^{\mathrm{sep}}/K) \to G$
a continuous homomorphism.
For a finite place $v$ of $K$,
we say $\rho$ is \emph{unramified} at $v$ if
the restriction of $\rho$ to $I_v$ is trivial.
It is satisfied for all but finitely many $v$.
If $\rho$ is unramified at $v$, we choose
an embedding
$\iota_v \colon K^{\mathrm{sep}} \hookrightarrow K_v^{\mathrm{sep}}$
and a lift
$\widetilde{\Frob_v} \in \Gal(K_v^{\mathrm{sep}}/K_v)$
of $\Frob_v$.
We consider it as an element of $\Gal(K^{\mathrm{sep}}/K)$
via the embedding
$\Gal(K_v^{\mathrm{sep}}/K_v) \hookrightarrow \Gal(K^{\mathrm{sep}}/K)$
induced by $\iota_v$.
We put
$\rho(\Frob_v) := \rho(\widetilde{\Frob_v}) \in G$.
The conjugacy class of $\rho(\Frob_v)$ does not depend on
the choices.

\begin{thm}[Chebotarev density theorem]
\label{Theorem:Chebotarev}
Let $K$ be a global field, $G$ a finite group, and
$\rho \colon \Gal(K^{\mathrm{sep}}/K) \to G$
a continuous homomorphism.
Let $S_g$ be the set of finite places $v$ of $K$ such that
$\rho$ is unramified at $v$ and $\rho(\Frob_v)$ belongs to
the conjugacy class $\mathrm{Conj}_G(g)$.
Then the Dirichlet density $\delta(S_g)$ exists and is equal to
$|\mathrm{Conj}_G(g)|/|G|$.
\end{thm}

\begin{proof}
See \cite[Chapter 6, Theorem 6.3.1]{FriedJarden} for example.
\end{proof}

\begin{rem}
\label{Remark:density}
If $K$ is a number field, the same assertion holds for the natural density instead of the Dirichlet density.
On the other hand, the natural density does not work well in the function field case;
see \cite[Remark 3.3]{Ballot}.
\end{rem}

Now we shall prove Theorem \ref{Theorem:GaloisCohomology}.
Let notation be as in  Theorem \ref{Theorem:GaloisCohomology}.
We fix an isomorphism $M \cong (\Z/p^s \Z)^{\oplus r}$.
Let
$\alpha \in H^1(\Gal(K^{\mathrm{sep}}/K), M)$
be an element in the kernel of $\Res_{M,S}$.
We shall show $\alpha$ is trivial.
As explained in the introduction, the element $\alpha$ corresponds
to a continuous homomorphism 
\[ \psi \colon \Gal(K^{\mathrm{sep}}/K) \to \AGL_r(\Z/p^s \Z). \]
Let $G := \psi(\Gal(K^{\mathrm{sep}}/K))$ be the image of $\psi$.
We put
\[ T := \{\, g \in G \mid g \ \text{is fixed point free.} \,\}. \]

For a finite place $v$ where $\psi$ is unramified,
the conjugacy class of $\psi(\Frob_v) \in G$
is well-defined.
The restriction of $\alpha$ to
$H^1(\Gal(K_v^{\mathrm{sep}}/K_v), M)$ is trivial
if and only if $\psi(\Frob_v) \notin T$.
In particular, we have $\psi(\Frob_v) \notin T$ for every $v \in S$.

By Theorem \ref{Theorem:Chebotarev},
we have
\[ d_G = |T|/|G| < p^{-rs}. \]
By Proposition \ref{Proposition:AffineTransformation2},
we have $d_G = 0$.
By Proposition \ref{Proposition:AffineTransformation},
there exists an element of $(\Z/p^s \Z)^{\oplus r}$ fixed by
every element of $G$.
Therefore, by Lemma \ref{Lemma:AffineFixedPoints},
the element
$\alpha \in H^1(\Gal(K^{\mathrm{sep}}/K), M)$
is trivial.

The proof of Theorem \ref{Theorem:GaloisCohomology} is complete.
\hfill $\square$

\section{Proof of Theorem \ref{Theorem:Counterexample}}
\label{Section:Counterexample}

\subsection{Some results from the Inverse Galois Theory}

In order to construct counterexamples of the Hasse principle satisfying the conditions in Theorem \ref{Theorem:Counterexample},
we shall use the following result from the Inverse Galois Theory.

\begin{thm}[Shafarevich, Sonn]
\label{Theorem:ShafarevichSonn}
Let $K$ be a global field (of any characteristic),
and $G$ a finite solvable group.
Then there exists a finite Galois extension $L/K$ with
\[ \Gal(L/K) \cong G \]
such that every decomposition group of $\Gal(L/K)$ is cyclic.
\end{thm}

\begin{proof}
Shafarevich proved the existence of a finite Galois extension $L/K$ with
\[ \Gal(L/K) \cong G. \]
Sonn observed that the extension $L/K$
constructed by Shafarevich
satisfies the condition that
every decomposition group of $\Gal(L/K)$ is cyclic.
It is proved in the proof of \cite[Theorem 2]{Sonn}.
It is used by Jahnel--Loughran;
see \cite[Lemma 2.7]{JahnelLoughran}.
\end{proof}

\subsection{Proof of Theorem \ref{Theorem:Counterexample} (1)}

Take a subgroup $G_i \subset \AGL_r(\Z/p^s \Z)$
as in Example \ref{Example:HassePrincipleFail}.
Then $G_i$ is a solvable group such that
$d_{G_i} = 0$, but
the natural action of $G_i$ on $(\Z/p^s \Z)^{\oplus r}$
has no fixed point.

Since $G_i$ is solvable,
by Theorem \ref{Theorem:ShafarevichSonn},
we can find a finite Galois extension $L/K$ with
$\Gal(L/K) \cong G_i$
such that every decomposition group of $\Gal(L/K)$ is cyclic.

Consider the continuous homomorphism
\[ \psi \colon \Gal(K^{\mathrm{sep}}/K) \to \Gal(L/K) \cong G_i \hookrightarrow \AGL_r(\Z/p^s \Z). \]
The composition of $\psi$ with the projection
\[
  \pi \colon \AGL_r(\Z/p^s \Z) \to \GL_r(\Z/p^s \Z)
\]
gives a linear action of $\Gal(K^{\mathrm{sep}}/K)$ on
$(\Z/p^s \Z)^{\oplus r}$.
It gives a Galois module. We denote it by $M$.

The composition of $\psi$ with the first projection
\[
  \AGL_r(\Z/p^s \Z) \to (\Z/p^s \Z)^{\oplus r} \cong M
\]
is a $1$-cocycle, which gives an element
$\alpha_{M} \in H^1(\Gal(K^{\mathrm{sep}}/K), M)$.
Since the natural action of $G_i$ on $(\Z/p^s \Z)^{\oplus r}$
has no fixed point,
the element $\alpha_{M}$ is non-trivial
by Lemma \ref{Lemma:AffineFixedPoints} (2).

Let $v$ be a place of $K$.
The image of $\Gal(K_v^{\mathrm{sep}}/K_v)$ in $\Gal(L/K) \cong G_i$
is the decomposition group at $v$.
It is a cyclic subgroup of $G_i$ by our choice of $L/K$.
Since $d_{G_i} = 0$, the natural action of
$\Gal(K_v^{\mathrm{sep}}/K_v)$ on $(\Z/p^s \Z)^{\oplus r}$
has a fixed point.
By Lemma \ref{Lemma:AffineFixedPoints} (2),
the element $\alpha_M$ sits in the kernel of the map
\[ H^1(\Gal(K^{\mathrm{sep}}/K), M) \to H^1(\Gal(K_v^{\mathrm{sep}}/K_v), M). \]
Since $v$ is arbitrary, we have $\Res_{M}(\alpha_{M}) = 0$.
Hence $\Res_{M}$ is not injective.
\hfill $\square$

\subsection{Proof of Theorem \ref{Theorem:Counterexample} (2)}

Take a solvable subgroup $G \subset \AGL_r(\Z/p^s \Z)$
with $d_G = p^{-rs}$
in Example \ref{Example:SmallProportion}.

Similarly as in (1),
we take a finite Galois extension $L/K$ with $\Gal(L/K) \cong G$.
Let $M$ be the $\Gal(K^{\mathrm{sep}}/K)$-module whose underlying
abelian group is $(\Z/p^s \Z)^{\oplus r}$
such that $\Gal(K^{\mathrm{sep}}/K)$ acts on it via
\[ \Gal(K^{\mathrm{sep}}/K) \to \Gal(L/K) \cong G \hookrightarrow \AGL_r(\Z/p^s \Z) \overset{\pi}{\to} \GL_r(\Z/p^s \Z) \cong \Aut(M). \]
The composition of the following maps
\[ \Gal(K^{\mathrm{sep}}/K) \to \Gal(L/K) \cong G \hookrightarrow \AGL_r(\Z/p^s \Z) \to (\Z/p^s \Z)^{\oplus r} \cong M \]
is a $1$-cocycle.
It gives an element
$\alpha_M \in H^1(\Gal(K^{\mathrm{sep}}/K), M)$.

Let $S$ be the set of finite places $v$
such that $\psi$ is unramified at $v$
and $\psi(\Frob_v)$ has a fixed point on
$(\Z/p^s \Z)^{\oplus r}$.
By Theorem \ref{Theorem:Chebotarev}, the Dirichlet density $\delta(S)$ exists and
\[ \delta(S) = 1 - d_G = 1 - p^{-rs}. \]

Then, the element $\alpha_M$ is non-trivial,
and it sits in the kernel of the map
\[ H^1(\Gal(K^{\mathrm{sep}}/K), M) \to H^1(\Gal(K_v^{\mathrm{sep}}/K_v), M) \]
for every $v \in S$.

Hence the map $\Res_{M,S}$ is not injective.
\hfill $\square$

\section{The Hasse principle for flexes on cubic curves}
\label{Section:Flex}

\subsection{The statement of the theorem for flexes}

Let $C \subset \PP^2$ be a smooth cubic curve over a field $K$.
A point $P$ on $C$ is called a \emph{flex}
(or an \emph{inflection point})
if the tangent line at $P$ intersects with $C$ with multiplicity $3$.
Since the work of Hesse in the 19th century,
flexes on smooth cubic curves have been studied by many mathematicians
\cite[Section 2]{ArtebaniDolgachev}, \cite[Chapter 3]{Dolgachev:ClassicalAlgebraicGeometry}.
It is well-known that, if $\chara K \neq 3$,
any smooth cubic curve over $K$ has exactly $9$ flexes over $K^{\mathrm{alg}}$.
The number of $K$-rational flexes is usually less than $9$.

In this section,
as an application of Theorem \ref{Theorem:GaloisCohomology},
we shall prove the following theorem.

\begin{thm}
\label{Theorem:Flex}
Let $K$ be a global field with $\chara K \neq 3$ (resp.\ $\chara K = 3$).
Let $C \subset \PP^2$ be a smooth cubic curve over $K$.
Assume that there is a set $S$ of finite places of $K$ such that
\begin{itemize}
\item the Dirichlet density $\delta(S)$ exists and satisfies $\delta(S) > 8/9$
(resp.\ $\delta(S) > 2/3$), and
\item $C$ has a $K_v$-rational flex for every $v \in S$.
\end{itemize}
Then $C$ has a $K$-rational flex.
\end{thm}

\begin{rem}
When $\chara K \neq 3$,
the Hasse principle for flexes follows easily from the results in the literature
(e.g., \cite[Chapter I, Theorem 4.10 (a)]{Milne}, \cite[Chapter I, Corollary 9.6]{Milne}).
\end{rem}

\subsection{Proof of Theorem \ref{Theorem:Flex} --- The case $\chara K \neq 3$}

We first consider the case $\chara K \neq 3$.

Let $K$ be a field with $\chara K \neq 3$.
Let $K^{\mathrm{alg}}$ be an algebraic closure of $K$,
and $K^{\mathrm{sep}}$ a separable closure of $K$ inside $K^{\mathrm{alg}}$.
Let $C \subset \PP^2$ be a smooth cubic curve over $K$.
Let
$\mathrm{He}(C) \subset \PP^2$
be the \emph{Hessian curve}.
(It is a cubic curve over $K$. See Section \ref{Section:FlexHessian}.)
The scheme of flexes on $C$ is defined by
\[ \mathrm{Flex}_C := C \cap \mathrm{He}(C). \]

\begin{lem}
\label{Lemma:Flex}
\begin{enumerate}
\item The scheme $\mathrm{Flex}_C$ is finite and \'etale over $K$.
There exist exactly $9$ flexes on $C$ over $K^{\mathrm{alg}}$,
and all of them are defined over $K^{\mathrm{sep}}$.

\item There is $\Gal(K^{\mathrm{sep}}/K)$-equivariant simply transitive action
\[ \eta \colon \Jac(C)[3](K^{\mathrm{sep}}) \times \mathrm{Flex}_C(K^{\mathrm{sep}}) \to \mathrm{Flex}_C(K^{\mathrm{sep}}). \]
\end{enumerate}
\end{lem}

\begin{proof}
Take a flex $P_0 \in \mathrm{Flex}_C(K^{\mathrm{alg}})$
over $K^{\mathrm{alg}}$;
see Proposition \ref{Proposition:Flex:CharNot2} and Proposition \ref{Proposition:Flex:Char2}.
The tangent line of $C$ at $P_0$ is defined by
$G_{P_0} = 0$
for a linear form $G_{P_0} \in K^{\mathrm{alg}}[X,Y,Z]$.

We shall construct a bijection
$\eta_{P_0} \colon \mathrm{Flex}_C(K^{\mathrm{alg}}) \to \Jac(C)[3](K^{\mathrm{alg}})$
as follows:
\begin{itemize}
\item For a flex $P \in \mathrm{Flex}_C(K^{\mathrm{alg}})$,
let $G_P = 0$ be the tangent line at $P$.
Since $\divi(G_P/G_{P_0}) = 3P - 3P_0$,
the divisor class of $P - P_0$ is killed by $3$.
We put $\eta_{P_0}(P) := [P - P_0]$.

\item Conversely, every element of $\Jac(C)[3](K^{\mathrm{alg}})$
is represented by $[P - P_0]$ for some $P \in C(K^{\mathrm{alg}})$
such that $3P - 3P_0$ is linearly equivalent to $0$; see \cite[Chapter IV, Theorem 4.11]{Hartshorne}.
Take a rational function $G_1/G_2$ with
$\divi(G_1/G_2) = 3P - 3P_0$.
The divisor cut out by the line $G_1 = 0$ is $3P$.
Hence $P$ is a flex.
\end{itemize}

There are exactly $9$ distinct $K^{\mathrm{alg}}$-rational flexes.
Since both $C$ and $\mathrm{He}(C)$ are cubic curves,
the multiplicity at every geometric point of
$C \cap \mathrm{He}(C)$ is equal to $1$
by \cite[Chapter V, Example 1.4.2]{Hartshorne}.
Thus, $\mathrm{Flex}_C$ is \'etale over $K$.
Hence we have
$\mathrm{Flex}_C(K^{\mathrm{alg}}) = \mathrm{Flex}_C(K^{\mathrm{sep}})$.
Then, the map $\eta$ defined by
$(\alpha, Q) \mapsto \eta_{P_0}^{-1}(\eta_{P_0}(Q) + \alpha)$
does not depend on the choice of $P_0$.
It gives a simply transitive action of 
$\Jac(C)[3](K^{\mathrm{sep}})$ on $\mathrm{Flex}_C(K^{\mathrm{sep}})$.
\end{proof}

We shall prove Theorem \ref{Theorem:Flex} when $\chara K \neq 3$.
We put $M := \Jac(C)[3](K^{\mathrm{sep}})$.
By Lemma \ref{Lemma:Flex} (2), we obtain
an element of $H^1(\Gal(K^{\mathrm{sep}}/K), M)$ as follows.
Fix an element $x_0 \in \mathrm{Flex}_C(K^{\mathrm{sep}})$
and identify $M$ and $\mathrm{Flex}_C(K^{\mathrm{sep}})$
via the map
$y \mapsto \eta_{x_0}(y) := \eta(y,x_0)$.

For an element $y \in M$,
we consider the map
\[
  \varphi_y \colon \Gal(K^{\mathrm{sep}}/K) \to M,
  \quad \varphi_y(\sigma) := \eta_{x_0}^{-1}(\sigma \cdot \eta_{x_0}(y)) - y.
\]
It is easy to see that $\varphi_y$ does not depend on the choice of $y$,
and it satisfies the cocycle condition.
Hence we have an element
$\alpha_{C} = [\varphi_y] \in H^1(\Gal(K^{\mathrm{sep}}/K), M)$.
The element $\alpha_{C}$ is trivial if and only if $C$ has a $K$-rational flex.
The same assertion holds over $K_v$ for every $v$.
Thus, $\alpha_{C}$ sits in the kernel of $\Res_{M, S}$.

By Theorem \ref{Theorem:GaloisCohomology},
the map $\Res_{M, S}$ is injective.
Consequently, $\alpha_{c}$ is trivial, and $C$ has a $K$-rational flex.

The proof of Theorem \ref{Theorem:Flex} is complete
when $\chara K \neq 3$.
\hfill $\square$

\subsection{Proof of Theorem \ref{Theorem:Flex} --- The case $\chara K = 3$}

Next, we shall consider the case $\chara K = 3$.
Although the scheme of flexes is not \'etale in characteristic $3$,
the Hasse principle for flexes holds in this case.

We first recall basic results on the flexes on smooth cubic curves in characteristic $3$.
Assume $\chara K = 3$.
Let $C \subset \PP^2$ be a smooth cubic curve over $K$,
$\mathrm{He}(C) \subset \PP^2$
the \emph{Hessian curve} of $C$, and
\[ \mathrm{Flex}_C := C \cap \mathrm{He}(C) \]
the scheme of flexes on $C$;
see Section \ref{Section:FlexHessian}.
Then, $\Jac(C)[3](K^{\mathrm{alg}})$ is
isomorphic to either $\F_3$ or $0$.
In the former case, we say $\Jac(C)$ is \emph{ordinary}.
Otherwise, we say it is \emph{supersingular}.

We omit the proof of the following lemma
because it is similar to Lemma \ref{Lemma:Flex}.

\begin{lem}
\label{Lemma:Flex:Characteristic3}
\begin{enumerate}
\item There is a map
\[ \eta \colon \Jac(C)[3](K^{\mathrm{alg}}) \times \mathrm{Flex}_C(K^{\mathrm{alg}}) \to \mathrm{Flex}_C(K^{\mathrm{alg}}) \]
which gives a simply transitive action of
$\Jac(C)[3](K^{\mathrm{alg}})$ on $\mathrm{Flex}_C(K^{\mathrm{alg}})$.

\item If $\Jac(C)$ is ordinary, $C$ has exactly $3$ flexes over $K^{\mathrm{alg}}$.
Otherwise, $C$ has a unique flex over $K^{\mathrm{alg}}$.

\item If $C$ has at least one $K^{\mathrm{sep}}$-rational flex,
the number of $K^{\mathrm{sep}}$-rational flexes is equal to
the number of elements in $\Jac(C)[3](K^{\mathrm{sep}})$.
\end{enumerate}
\end{lem}

In characteristic $3$,
a $K^{\mathrm{alg}}$-rational flex which is stable under $\Aut(K^{\mathrm{alg}}/K)$
might not be defined over $K$
due to the possible inseparability of the field of definition.
To treat this issue,
we shall use Greenberg's approximation theorem,
which is a special case of Artin's approximation theorem.

\begin{lem}
\label{Lemma:Approximation}
Let $K$ be a global field with $\chara K > 0$, and $v$ a finite place of $K$.
We fix an embedding $K^{\mathrm{alg}} \hookrightarrow K_v^{\mathrm{alg}}$.
For a finite $K$-scheme $T$,
every $K_v$-rational point on $T$ is defined over
a finite separable extension of $K$.
\end{lem}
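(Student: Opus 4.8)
The plan is to reduce the statement to a concrete assertion about the field $k_v$ — namely, that every finite subextension of $k_v/k$ is separable over $k$ — and then prove that assertion by a short valuation‑theoretic argument. (I will also sketch at the end the approximation‑theoretic route hinted at just before the lemma.)

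For the reduction, write $T = \mathrm{Spec}\, A$ with $A$ a finite‑dimensional $k$‑algebra. A $k_v$‑rational point $P$ of $T$ is a $k$‑algebra homomorphism $A \to k_v$; its image is a finite‑dimensional $k$‑subalgebra of $k_v$ which, being an integral domain, is a field $L = k(P)$ that is finite over $k$. The point $P$ then factors through $\mathrm{Spec}\, L \hookrightarrow T$, so it suffices to show that the finite extension $L/k$ is separable. Thus everything comes down to: $k_v$ contains no nontrivial finite purely inseparable extension of $k$.

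Here is how I would prove that. Since $\chara k = p > 0$ and $k$ is a global field, $k$ is a finitely generated extension of the perfect field $\F_p$ of transcendence degree $1$, so $[k : k^p] = p$. Hence the field $k \cap k_v^p$, which lies between $k^p$ and $k$, is either $k^p$ or $k$; and it cannot be $k$, since if we pick $\pi \in k$ with $\ord_v(\pi) = 1$ then $\pi \in k \setminus k_v^p$, because every $p$‑th power in $k_v$ has valuation divisible by $p$. Therefore $k \cap k_v^p = k^p$. Now suppose $L/k$ were not separable; then some $\alpha \in L \setminus k$ is purely inseparable over $k$, and replacing $\alpha$ by a suitable iterated $p$‑th power $\beta = \alpha^{p^{e-1}}$ we may assume $\beta \notin k$ while $a := \beta^p \in k$. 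Then $a \in k_v^p$ (as $\beta \in L \subseteq k_v$), whereas $a \notin k^p$ (if $a = b^p$ with $b \in k$ then $(\beta/b)^p = 1$, forcing $\beta = b \in k$ in characteristic $p$), contradicting $k \cap k_v^p = k^p$. Hence $L/k$ is separable, and $P$ is defined over the finite separable extension $L$. The only point that needs care is the input $[k:k^p] = p$ for global function fields and the observation that it squeezes $k \cap k_v^p$ down to $k^p$; the rest is formal.

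Alternatively, following the approximation route: embed $T$ as a closed subscheme of some $\A^N_k$ cut out by $f_1,\dots,f_r \in k[x_1,\dots,x_N]$, and let $k_v^{h} \subset k_v$ be the henselization of $k$ at $v$, which is a filtered union of finite separable extensions of $k$ (the henselization of the excellent local ring $\mathcal{O}_{k,(v)}$ is a filtered colimit of local–étale extensions). Because $T$ is finite over $k$, the set $T(k_v) \subseteq k_v^N$ is finite, hence discrete. Greenberg's approximation theorem, a special case of Artin's, gives for every $n$ a $k_v^{h}$‑point congruent modulo $\mathfrak{m}_v^{\,n}$ to a prescribed $k_v$‑point; taking $n$ large enough, discreteness of $T(k_v)$ forces the $k_v^{h}$‑point to equal the given one, so the given point lies in $T(k_v^{h})$, hence in $T(L)$ for a finite subextension $L/k$ of $k_v^{h}$, which is separable. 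In this route the delicate points are the bookkeeping with denominators when applying Greenberg's theorem to a possibly non‑integral point, and the step "sufficiently close implies equal", which is exactly where the finiteness of $T$ is used.
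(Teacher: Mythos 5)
Your primary (valuation-theoretic) argument is a genuinely different route from the paper's, but it has one real gap: the step ``suppose $L/k$ were not separable; then some $\alpha \in L \setminus k$ is purely inseparable over $k$.'' This implication is false for general base fields of characteristic $p$ --- a finite non-separable extension need not contain any purely inseparable element outside the base field (standard examples exist over $\F_p(u,w)$, where $[k:k^p]=p^2$) --- so it cannot be dismissed as formal. It is true in your situation, but only because $[k:k^p]=p$, and that requires an argument you have not supplied: for instance, if $L$ were linearly disjoint from $k^{1/p}$ over $k$ then $L\otimes_k k^{1/p}$ would be a domain and hence $L/k$ separable (MacLane's criterion), and since $[k^{1/p}:k]=p$ is prime, failure of linear disjointness forces $k^{1/p}\subseteq L$, which is exactly the purely inseparable element you need. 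A cleaner repair avoids the claim entirely: let $k_s$ be the separable closure of $k$ in $L$; if $L\neq k_s$ pick $\beta\in L\setminus k_s$ with $\beta^p\in k_s$. Since $k_s$ is again a global function field, $[k_s:k_s^p]=p$, and $k_s$ contains your $\pi$ with $\ord_v(\pi)=1$, the same squeeze gives $k_s\cap k_v^p=k_s^p$, so $\beta^p=\gamma^p$ with $\gamma\in k_s$ and $\beta=\gamma\in k_s$, a contradiction. With this patch your elementary proof is correct, and it is more self-contained than the paper's, requiring no approximation theorem.

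The remaining parts are fine: the reduction to the residue field $L=k(P)$, the computation $[k:k^p]=p$, and the squeeze $k\cap k_v^p=k^p$ via the uniformizer are all correct. Your ``alternative'' sketch is essentially the paper's own proof (Greenberg's approximation over the henselization of the excellent discrete valuation ring $\O_{k,v}$, separability of $\Frac(R^{\mathrm{h}})/k$, and finiteness of $T$ to promote ``sufficiently close'' to ``equal''), so it confirms the intended mechanism but does not constitute an independent route.
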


\begin{proof}
We give a brief sketch of the proof because
essentially the same result was proved
in the proof of
\cite[Theorem 4.1]{IshitsukaIto:LocalGlobalChar2}.
Let $R := \mathcal{O}_{K,v} \subset K$ be the discrete valuation ring
corresponding to $v$,
and $R^{\mathrm{h}}$ the Henselization of $R$.
Since $R$ is an excellent discrete valuation ring,
by Greenberg's approximation theorem \cite[Theorem 1]{Greenberg:RationalPoints},
every $K_v$-rational point on $T$ is approximated by
a $\Frac(R^{\mathrm{h}})$-rational point.
Since $T$ is finite over $K$ and the extension $\Frac(R^{\mathrm{h}})/K$ is separable and algebraic,
every $K_v$-rational point of $T$ is defined over $K^{\mathrm{sep}}$.
\end{proof}

Now we shall prove Theorem \ref{Theorem:Flex} when $\chara K = 3$.

We first consider the ordinary case.
Assume that $\Jac(C)$ is ordinary.
Since the set $S$ is non-empty,
the cubic curve $C$ has a $K_v$-rational flex for a finite place $v$.
Thus the scheme $\mathrm{Flex}_C$ has a $K_v$-rational point.
By Lemma \ref{Lemma:Approximation},
it has a $K^{\mathrm{sep}}$-rational point,
and $C$ has a $K^{\mathrm{sep}}$-rational flex.
Since $\Jac(C)$ is ordinary,
$\Jac(C)[3](K^{\mathrm{sep}})$ is isomorphic to either $\F_3$ or $0$.
By Lemma \ref{Lemma:Flex:Characteristic3} (3),
the number of $K^{\mathrm{sep}}$-rational flexes on $C$
is either $3$ or $1$.
\begin{itemize}
\item
If $C$ has 3 flexes over $K^{\mathrm{sep}}$,
$\Gal(K^{\mathrm{sep}}/K)$ acts on $\mathrm{Flex}_C(K^{\mathrm{sep}})$
via $\AGL_1(\F_3)$.
Since $\delta(S) > 2/3$,
Theorem \ref{Theorem:Chebotarev},
Proposition \ref{Proposition:AffineTransformation},
and Proposition \ref{Proposition:AffineTransformation2}
imply the action of $\Gal(K^{\mathrm{sep}}/K)$ on $\mathrm{Flex}_C(K^{\mathrm{sep}})$
has a fixed point.
It corresponds to a $K$-rational flex.

\item 
If $C$ has a unique $K^{\mathrm{sep}}$-rational flex,
it is fixed by $\Gal(K^{\mathrm{sep}}/K)$ by the uniqueness.
Hence $C$ has a $K$-rational flex.
\end{itemize}

Next, we consider the supersingular case.
Assume that $\Jac(C)$ is supersingular.
Since $C$ has a $k_{v_0}$-rational flex,
by Lemma \ref{Lemma:Approximation},
$C$ has at least one $K^{\mathrm{sep}}$-rational flex.
Since $\Jac(C)[3](K^{\mathrm{sep}}) = 0$,
$C$ has a unique $K^{\mathrm{sep}}$-rational flex
by Lemma \ref{Lemma:Flex:Characteristic3} (3).
It is fixed by $\Gal(K^{\mathrm{sep}}/K)$ by the uniqueness.
Therefore, $C$ has a $K$-rational flex.

The proof of Theorem \ref{Theorem:Flex} is complete
when $\chara K = 3$.
\hfill $\square$

\begin{rem}
Similar arguments appeared in
\cite{IshitsukaIto:LocalGlobal} and \cite{IshitsukaIto:LocalGlobalChar2}.
See the proof of \cite[Theorem 5.1 (2)]{IshitsukaIto:LocalGlobal}
for the ordinary case,
and the proof of \cite[Theorem 4.1]{IshitsukaIto:LocalGlobalChar2}
for the supersingular case.
\end{rem}

\begin{rem}
When $\chara K = 3$, the lower bound $2/3$ in Theorem \ref{Theorem:Flex}
is optimal.
Let $C$ be a smooth cubic curve over $K$
such that $\Jac(C)$ is ordinary and
every flex on $C$ is defined over $K^\mathrm{sep}$.
Let $S_C$ be the set of finite places of $K$
where $C$ has a flex locally.
The action of $\Gal(K^\mathrm{sep}/K)$ on the flexes
gives a continuous homomorphism
$\rho_C \colon \Gal(K^\mathrm{sep}/K) \to \AGL_1(\F_3)$.
Then, it is easy to see that $\delta(S_C) = 2/3$
if and only if $\rho_C$ is surjective.
For example, this condition is satisfied for the smooth cubic curve over $\F_3(T)$ defined by
\[ X^3 - T Y^3 - (T^2+1)Z^3 - X^2Z + XY^2 + XZ^2 - TYZ^2 = 0. \]
\end{rem}

\begin{ex}
\label{Example:FlexCharacteristic3}
For a smooth cubic curve $C$ over $\F_3(T)$,
the number of flexes on $C$ defined over $\F_3(T)^{\mathrm{sep}}$
(resp.\ $\F_3(T)^{\mathrm{alg}}$) is denoted by
$N_{\mathrm{sep}}$ (resp.\ $N_{\mathrm{alg}}$).
By Lemma \ref{Lemma:Flex}, $N_{\mathrm{alg}}$ is either $1$ or $3$.
We have $N_{\mathrm{alg}} = 1$ (resp.\ $3$)
if and only if $\Jac(C)$ is supersingular (resp.\ ordinary).
There are $5$ possibilities for the pair $(N_{\mathrm{sep}}, N_{\mathrm{alg}})$:
$(0,1)$, $(0,3)$, $(1,1)$, $(1,3)$, $(3,3)$.
The following table gives examples of smooth cubic curves over $\F_3(T)$
for each of these possibilities:
\[
\begin{array}{|c|l|l|l|}
\hline
& \ \text{Equation} \ & \ \text{Hessian curve} \ & \ \text{Jacobian} \ \\
\hline
\ C_1 \ & XY^2 - X^2Z = Z^3 - TY^3 & X^3 = 0 &
\ \text{supersingular} \ \\
\hline
C_2 & T^2 X^3 + T Y^3 + Z^3 + XYZ = 0 & XYZ = 0 &
\ \text{ordinary} \ \\
\hline
C_3 & Y^2 Z = X^3 + X Z^2 + Z^3 & Z^3 = 0 &
\ \text{supersingular} \ \\
\hline
C_4 & Y^2 Z = X^3 + X^2 Z + T Z^3 & (X - Y)(X + Y)Z = 0 &
\ \text{ordinary} \ \\
\hline
C_5 & Y^2 Z  = X^3 + X^2 Z + Z^3 & (X - Y)(X + Y)Z = 0 &
\ \text{ordinary} \ \\
\hline
\end{array}
\]
\[
\begin{array}{|c|c|c|l|}
\hline
& \ N_{\mathrm{sep}} \ & \ N_{\mathrm{alg}}\  &\ \text{Flexes} \\
\hline
\ C_1 \ & 0 & 1 & \ [0 : 1 : T^{1/3}] \\
\hline
C_2 & 0 & 3 & \ [0 : 1 : -T^{1/3}], \ [1 : 0 : -T^{2/3}],\ [1 : -T^{1/3} : 0] \\
\hline
C_3 & 1 & 1 & \ [0 : 1 : 0] \\
\hline
C_4 & 1 & 3 & \ [0 : 1 : 0],\ [-T^{1/3} : T^{1/3} : 1],\ [-T^{1/3} : -T^{1/3} : 1] \ \\
\hline
C_5 & 3 & 3 & \ [0 : 1 : 0],\ [-1 : -1 : 1],\ [-1:1:1] \\
\hline
\end{array}
\]
\end{ex}

\begin{rem}
In characteristic $3$, the Hessian curve of a smooth cubic curve
always has a singular point
because the determinant of the Hessian matrix splits
into the product of three linear forms;
see Proposition \ref{Proposition:HesseSmoothness}.
\end{rem}

\subsection{Examples of flexes (1)}

Here we give an example of a smooth cubic curve over $\Q$
which shows the bound $8/9$ in Theorem \ref{Theorem:Flex} is optimal.

Let $C \subset \PP^2$ be the smooth cubic curve over $\Q$
defined by
\[ X^3 + 2 Y^3 + 3 Z^3 = 0. \]

Let $S$ be the set of prime numbers $p$ such that
$C$ has a $\Q_p$-rational flex.
The Hessian curve is defined by $XYZ = 0$.
We put
$\alpha := 2^{1/3}$, $\beta := 3^{1/3}$, and $\zeta := \exp(2 \pi \sqrt{-1}/3)$.
The following nine points (in projective coordinates)
are the flexes on $C$:
\begin{center}
\begin{tabular}{ccc}
$[0 : \beta : -\alpha]$, & $[0 : \beta : -\alpha \zeta]$, & $[0 : \beta : -\alpha \zeta^2]$,  \\
$[\beta : 0 : -1]$, & $[\beta : 0 : -\zeta]$, & $[\beta : 0 : -\zeta^2]$,  \\
$[\alpha : -1 : 0]$, & $[\alpha : -\zeta : 0]$, & $[\alpha : -\zeta^2 : 0]$.
\end{tabular}
\end{center}
All of these flexes are defined over $\Q(\alpha,\beta,\zeta)$.
The Galois group
$\Gal(\Q(\alpha,\beta,\zeta)/\Q)$ consists of the elements
$\sigma_{a,b,c}$ with 
$\sigma_{a,b,c}(\alpha) = \alpha \zeta^a$,
$\sigma_{a,b,c}(\beta) = \beta\zeta^b$,
and
$\sigma_{a,b,c}(\zeta) = \zeta^c$
for $a,b \in \{ 0,1,2 \}$ and $c \in \{ 1,2 \}$.
Among them, $\sigma_{1,2,1}$ and $\sigma_{2,1,1}$ do not fix any flex.
Every element other than $\sigma_{1,2,1}$ and $\sigma_{2,1,1}$ fixes 
at least one flex.

Let $S_C$ be the set of places of $\Q$ where $C$ has a flex locally.
By Theorem \ref{Theorem:Chebotarev}, the Dirichlet density $\delta(S_C)$ exists and
$\delta(S_C) = 16/18 = 8/9$.

The action of $\Gal(\overline{\Q}/\Q)$ on the flexes on $C$
corresponds to a continuous homomorphism 
$\Gal(\overline{\Q}/\Q) \to \AGL_2(\F_3)$.
Its image is conjugate to the group $G$ in Example \ref{Example:SmallProportion}
for $(p,r,s) = (3,2,1)$.

\subsection{Examples of flexes (2)}

Here we give another example of a smooth cubic curve over $\Q(\zeta_3)$
which also shows the bound $8/9$ in Theorem \ref{Theorem:Flex} is optimal.
Here $\zeta_3$ is a primitive third root of unity.

Consider the smooth cubic curve $C' \subset \PP^2$ over $\Q(\zeta_3)$ defined by
\[ X^3 + 3 Y^3 + 9 Y Z^2 - 6 Z^3 = 0. \]

Then, it can be checked directly that the image of the absolute Galois group
is a subgroup $G' \subset \AGL_2(\F_3)$ of order $18$
and $\delta(S_{C'}) = 8/9$.
The image of $G'$ by the projection
$\pi \colon \AGL_2(\F_3) \to \GL_2(\F_3)$ is conjugate to the following group:
\[
\bigg\{
\begin{pmatrix}
a & b \\ 0 & a
\end{pmatrix}
\ \bigg| \ a \in \F_3^{\times},\ b \in \F_3
\bigg\}.
\]
In particular, $\pi(G')$ is a subgroup of $\SL_2(\F_3)$.

We can construct similar examples of cubic curves over any global field of characteristic different from 3. Details are left to the reader.

\section{Flexes and Hessians of smooth cubic curves}
\label{Section:FlexHessian}

In this section, we summarize basic results on flexes and Hessian curves of
smooth cubic curves over fields of arbitrary characteristics.
The results in this section are well-known
at least when the characteristic is different from $2,3$.
But the authors could not find reasonably self-contained
references which treat the case of characteristic $2$ or $3$.

\subsection{The case of characteristic different from $2$}

In this subsection, $K$ is a field with $\chara K \neq 2$.
Let $C \subset \PP^2$ be a smooth cubic curves over $K$
whose defining equation is $F(X,Y,Z) = 0$.
The \emph{Hessian matrix} of $F$ is defined by
\[
H_F := \begin{pmatrix}
F_{XX} & F_{XY} & F_{XZ} \\
F_{YX} & F_{YY} & F_{YZ} \\
F_{ZX} & F_{ZY} & F_{ZZ}
\end{pmatrix}.
\]
Here $F_{XX} = \partial_X^2 F,\ F_{XY} = \partial_X\partial_Y F, \dots$ are 
the second order derivatives of $F$.
The \emph{Hessian curve} $\mathrm{He}(C) \subset \PP^2$
of $C$ is the cubic curve over $K$ defined by $\det H_F = 0$.
It is a plane cubic curve over $K$.
It is not smooth in general.

\begin{prop}
\label{Proposition:Flex:CharNot2}
Assume that $\chara K \neq 2$.
\begin{enumerate}
\item The determinant of the Hessian matrix $H_F$ is a non-zero homogeneous polynomial of degree $3$.
Thus, the equation $\det H_F = 0$ defines a cubic curve;
we denote it by $\mathrm{He}(C) \subset \PP^2$.

\item A $K^{\mathrm{alg}}$-rational point $P \in C(K^{\mathrm{alg}})$
is a flex on $C$ if and only if $P$ lies on
the Hessian curve $\mathrm{He}(C)$.
\end{enumerate}
\end{prop}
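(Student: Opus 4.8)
The plan is to prove Proposition \ref{Proposition:Flex:CharNot2} by a direct local computation at a point of $C$, using the classical relation between the intersection multiplicity of a curve with its tangent line and the vanishing of the Hessian determinant. For part (1), I would first check that $\det H_F$ is homogeneous of degree $3$: each second partial $F_{X_iX_j}$ is homogeneous of degree $1$, so the $3\times 3$ determinant is homogeneous of degree $3$. To see that it is not identically zero, it suffices to exhibit one smooth cubic $F$ with $\det H_F \neq 0$ (for instance the Fermat cubic $X^3+Y^3+Z^3$, whose Hessian is a nonzero multiple of $XYZ$ when $\chara k \neq 3$, or some other explicit example covering $\chara k = 3$); alternatively one can argue via part (2), since a smooth cubic always has a flex (cf. Proposition \ref{Proposition:Flex:AnyChar}), forcing $C \cap \mathrm{He}(C)$ to be nonempty and hence $\det H_F$ nonzero.

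For part (2), I would work in an affine chart, say $Z \neq 0$, and dehomogenize $F$ to $f(x,y)$. After a linear change of coordinates (permissible since $\mathrm{He}(C)$ transforms covariantly under $\GL_3$, up to a nonzero scalar determined by the change-of-basis matrix — a point worth recording separately), I may assume the point $P$ is the origin and the tangent line at $P$ is $y = 0$, so $f(x,y) = y + (\text{higher order terms})$. Expanding $f = y + a x^2 + b xy + c y^2 + (\deg \geq 3)$, the intersection of $C$ with the tangent line $y=0$ near $P$ is governed by $f(x,0) = a x^2 + (\deg \geq 3)$; thus $P$ is a flex if and only if $a = 0$. The task is then to show that $\det H_F(P)$ (equivalently $\det H_f$ evaluated suitably, accounting for the Euler-relation bordered-Hessian identity in homogeneous coordinates) vanishes at $P$ precisely when $a = 0$. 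This is the computational heart: using Euler's identity $X F_X + Y F_Y + Z F_Z = 3F$ repeatedly, one rewrites the $3\times 3$ homogeneous Hessian determinant at a point of $C$ in terms of the affine $2\times 2$ Hessian of $f$ plus the gradient, and the restriction to the tangent-line direction isolates the coefficient $2a$ up to a nonzero factor (here $\chara k \neq 2$ is used).

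The main obstacle I anticipate is the bookkeeping in relating the homogeneous $3\times 3$ Hessian to the affine picture: one must be careful that the ``bordered Hessian'' / Euler-relation manipulation is valid, that the nonzero scalars introduced by dehomogenization and by the linear coordinate change do not vanish (this is exactly where $\chara k \neq 2$ enters, and also why the characteristic $2$ case genuinely needs a different definition of $\mathrm{He}(C)$), and that the argument is uniform over all characteristics other than $2$, including $\chara k = 3$ where $\det H_F$ may behave differently (e.g. the Fermat cubic degenerates). A clean way to organize this is to prove the standard identity
\[
\det H_F = \frac{(\text{const})}{\cdots}\,\bigl(\text{expression in } f \text{ and its derivatives}\bigr)
\]
locally and then read off the flex condition; I would state the coordinate-covariance of $\mathrm{He}(C)$ as a preliminary remark so that the reduction to $P = [0:0:1]$ with tangent $y = 0$ is justified without repetition. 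Once the local identity is in place, both the non-vanishing in (1) and the equivalence in (2) follow immediately.
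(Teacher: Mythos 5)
Your plan for part (2) is sound, and it takes a genuinely different route from the paper. The paper never passes to an affine chart: it parametrizes the tangent line as $aP_0+bP_1$, Taylor-expands $F(aP_0+bP_1)$, and uses Euler's relation $\transp{P_0}H_F(P_0)=2\,\partial F(P_0)$ to show that a nonzero vector $P_1\in\ker H_F(P_0)$ lies on the tangent line and forces contact of order $3$, and conversely that at a flex the symmetric form $H_F(P_0)$ vanishes identically on the plane spanned by $P_0$ and $P_1$, hence is degenerate. Your dehomogenized bordered-Hessian computation (with the covariance $\det H_{F\circ A}=(\det A)^2(\det H_F)\circ A$ recorded, and with the point moved to the origin with tangent $y=0$) reaches the same conclusion: one gets $Z^2\det H_F(P)$ equal to a nonzero constant times the coefficient $a$, the constants $(d-1)^2=4$ and $2$ being invertible precisely because $\chara k\neq 2$, and the identity is uniform in characteristic $3$. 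Both arguments hinge on Euler's identity and on $\chara k\neq 2$; yours is more explicitly computational, the paper's avoids charts and the bordered-determinant bookkeeping.

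There is, however, a genuine gap in your part (1). The claim is that $\det H_F\neq 0$ for the \emph{given} smooth cubic $F$, so exhibiting one cubic (e.g.\ the Fermat cubic) with nonvanishing Hessian proves nothing about an arbitrary $F$. Your alternative argument is also flawed: the inference runs backwards, since if $\det H_F$ were identically zero its zero locus would be all of $\PP^2$ and would certainly meet $C$, so nonemptiness of $C\cap\mathrm{He}(C)$ cannot yield $\det H_F\neq 0$; moreover it is circular within this paper, because the existence of a flex in Proposition \ref{Proposition:Flex:AnyChar} is itself obtained by applying B\'ezout to $C$ and $\mathrm{He}(C)$, which presupposes part (1). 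What you actually need is a \emph{non}-flex point, at which your part-(2) identity gives $\det H_F(P)\neq 0$. A quick way to get one without circularity: if every $P\in C(k^{\mathrm{alg}})$ were a flex, then $3P$ would be cut out by a line for every $P$, so $3(P-Q)\sim 0$ for all $P,Q$, and the injective Abel--Jacobi map $P\mapsto[P-Q]$ would send the infinite set $C(k^{\mathrm{alg}})$ into the finite group $\Jac(C)[3]$, a contradiction. (The paper's sketch leaves the nonvanishing in (1) implicit as well, so this point deserves to be spelled out in any complete write-up.)
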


\begin{proof}
Here, we shall give a brief sketch of the proof.
By a slight abuse of notation, we consider points on $\PP^2$
as column vectors.

We take a $K^{\mathrm{alg}}$-rational point
$P_0 = [X_0: Y_0: Z_0] \in C(K^{\mathrm{alg}})$.
We take a point $P_1 = [X_1: Y_1: Z_1]$ which lies on the tangent line of $C$ at $P_0$, but not on $C$.
We parametrize the points on the tangent line as
$a P_0 + b P_1$ for $[a:b] \in \PP^1(K)$.
Let
    \[
        F(aP_0 + bP_1) =
        a^3F(P_0) + a^2b \, \partial F (P_0) \cdot P_1 +
        ab^2 Q_{F, P_0}(P_1) + b^3 F(P_1)
    \]
be the Taylor expansion of $F(X,Y,Z)$.
Here we put
\begin{align*}
\partial F(P_0) &:= (F_X(P_0), F_Y(P_0), F_Z(P_0)) \\
Q_{F, P_0}(P_1) &:= 2^{-1} \cdot \transp P_1 H_F(P_0) P_1. \\
F_X &:= \partial_X F, \qquad F_Y := \partial_Y F, \qquad F_Z := \partial_Z F,
\end{align*}
where $\transp P_1$ is the transpose of $P_1$ (i.e.\ it is a row vector), and $H_F(P_0)$ is the Hessian matrix at $P_0$.
We have $F(P_0) = 0$ since $P_0$ lies on $C$.
Since $\deg F_X = \deg F_Y = \deg F_Z = 2$,
we have
$\transp P_0 H_F(P_0) = 2 \, \partial F(P_0)$.
If $H_F(P_0)$ has a non-zero kernel and $P_1$ lies in the kernel,
we have
\begin{align*}
  \partial F(P_0) \cdot P_1 &= 2^{-1} \cdot \transp P_0 H_F(P_0) P_1 = 0 \\
  Q_{F, P_0}(P_1) &= 2^{-1} \cdot \transp P_1 H_F(P_0) P_1 = 0.
\end{align*}
(Here we use $\chara K \neq 2$.)
Then we have
$F(aP_0 + bP_1) = b^3 F(P_1)$,
and the tangent line of $C$ at $P_0$ 
meets $C$ at $P_0$ with multiplicity $3$.
Hence the common zero of $H_F$ and $F$ is a flex on $C$.

Conversely, if the line through $P_0$ and $P_1$ is 
a flex line with the flex point $P_0 \in C(k)$
and $P_1 \neq P_0$,
then $F(aP_0 + bP_1)=0$ has a solution at $b=0$
with multiplicity 3.
Thus we have
\[
  F(aP_0 + bP_1) = b^3F(P_1)
  \quad \text{and} \quad
  F(P_0) = \partial F(P_0) \cdot P_1 = Q_{F, P_0}(P_1) = 0.
\]
Therefore, we have
\begin{align*}
    \transp P_0 H_F(P_0) P_0 &= 6 \, F(P_0) = 0, \\
    \transp P_0 H_F(P_0) P_1 &= \transp P_1 H_F(P_0) P_0
= 2 \, \partial F(P_0) \cdot P_1 = 0, \\
    \transp P_1 H_F(P_0) P_1 &= 2 \, Q_{F, P_0}(P_1) = 0.
\end{align*}
Hence we have $\det H_F(P_0) = 0$.
\end{proof}

\subsection{The case of characteristic $2$}

In \cite{Glynn}, Glynn gave a definition of the Hessian curve in characteristic $2$,
which is closely related to the Hessian curve
previously defined by Dickson in 1915 \cite{Dickson}.
Here we shall recall Glynn's results;
see also \cite[Section 9]{AnemaTopTuijp}.
Note that, in characteristic $2$,
we need to modify the definition of the Hessian curve because
the determinant of the Hessian matrix always vanishes.
(In characteristic $2$, the Hessian matrix is an alternating matrix of odd size; its determinant always vanishes.)

Let $K$ be a field of characteristic $2$.
Let $C \subset \PP^2$ be a smooth cubic curve over $K$ defined by the equation
\begin{align*}
a X^3 + b Y^3 + c Z^3 + d X^2 Y + e X^2 Z + f Y^2 X \\
+ g Y^2 Z + h Z^2 X + i Z^2 Y + j X Y Z &= 0
\end{align*}
for $a,b,c,d,e,f,g,h,i,j \in k$.
Then $\Jac(C)$ is ordinary (resp.\ supersingular)
if and only if $j \neq 0$ (resp.\ $j = 0$);
see \cite[Chapter IV, Proposition 4.21]{Hartshorne}.

The \emph{Hessian curve} $\mathrm{He}(C) \subset \PP^2$
of $C$ as a (not necessarily smooth) cubic curve over $K$
defined by
\begin{align*}
 (d e j + a d i + d^2 h + a e g + e^2 f) X^3 \\
+ (f g j + f^2 i + b f h + d g^2 + b e g) Y^3 \\
+ (h i j + e i^2 + c d i + g h^2 + c f h) Z^3 \\
+ (d j^2 + a g j + a f i + a b h + d e g + b e^2) X^2 Y \\
+ (e j^2 + a i j + d e i + a g h + a c f + c d^2) X^2 Z \\
+ (f j^2 + b e j + a b i + b d h + a g^2 + e f g) Y^2 X \\
+ (g j^2 + b h j + b e i + f g h + c f^2 + b c d) Y^2 Z \\
+ (h j^2 + c d j + a i^2 + d h i + a c g + c e f) Z^2 X \\
+ (i j^2 + c f j + f h i + b h^2 + c d g + b c e) Y Z^2 \\
+ (j^3 + a g i + e f i + d g h + b e h + c d f + a b c) X Y Z &= 0.
\end{align*}

\begin{prop}[Glynn]
\label{Proposition:Flex:Char2}
Let $K$ be a field of characteristic $2$.
A point $P \in C(K^{\mathrm{alg}})$ is a flex on $C$
if and only if $P$ lies on the Hessian curve $\mathrm{He}(C)$.
\end{prop}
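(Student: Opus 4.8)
The plan is to follow Glynn \cite{Glynn} (see also \cite{Dickson} and \cite[Section 8]{AnemaTopTuijp}): first reduce the statement to a local criterion for flexes that makes sense in every characteristic, and then identify the resulting locus with the cubic $\mathrm{He}(C)$ displayed above by an explicit elimination.

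For the local criterion, introduce the directional derivative $D_v F(u) := \sum_j v_j F_{X_j}(u)$; for a cubic $F$ one has the Taylor expansion
\[ F(u + tv) = F(u) + t\, D_v F(u) + t^2\, D_u F(v) + t^3 F(v), \]
which uses no division, so it remains valid over $\F_2$. Fix $P \in C(k^{\mathrm{alg}})$, so $F(P) = 0$, and let $\ell_P$ be the tangent line at $P$, so that a point $Q$ lies on $\ell_P$ exactly when $D_Q F(P) = 0$. For such a $Q$ with $Q \neq P$, the restriction of $F$ to $\ell_P$, parametrized by $\lambda P + \mu Q$, becomes
\[ F(\lambda P + \mu Q) = \mu^2\bigl(\lambda\, D_P F(Q) + \mu\, F(Q)\bigr), \]
since the $\lambda^3$- and $\lambda^2\mu$-coefficients $F(P)$ and $D_Q F(P)$ vanish. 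As $\ell_P$ is not a component of the smooth (hence irreducible) cubic $C$, the linear factor $\lambda\, D_P F(Q) + \mu\, F(Q)$ is nonzero, and it is divisible by $\mu$ — equivalently the tangent line meets $C$ at $P$ with multiplicity $3$ — precisely when $D_P F(Q) = 0$; thus $P$ is a flex if and only if $D_P F(Q) = 0$. By Euler's identity $D_P F(P) = 3 F(P) = 0$, so the conic $\{D_P F = 0\}$ — the first polar of $C$ with respect to $P$ — passes through $P$; a short computation from the expansion above shows that its polar bilinear form vanishes at $(P, Q)$, so $D_P F$ restricted to $\ell_P$ is a scalar multiple of $\mu^2$, and the condition $D_P F(Q) = 0$ is independent of the choice of $Q \in \ell_P \setminus \{P\}$. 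In other words: $P \in C(k^{\mathrm{alg}})$ is a flex if and only if the first polar conic $\{D_P F = 0\}$ contains the tangent line $\ell_P$.

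It remains to convert this criterion into a single homogeneous equation in the coordinates $[X_0 : Y_0 : Z_0]$ of $P$. Writing $\ell_P$ explicitly as $F_X(P) X + F_Y(P) Y + F_Z(P) Z = 0$, the requirement that the quadratic form $D_P F$ restrict to $0$ on $\ell_P$ is, with $X_0, Y_0, Z_0$ treated as indeterminates, a polynomial condition in $X_0, Y_0, Z_0$ and the ten coefficients $a, \dots, j$ of $F$; reducing it modulo the relation $F(X_0, Y_0, Z_0) = 0$ produces a homogeneous cubic in $X_0, Y_0, Z_0$, and the claim to be verified is that it agrees, up to a nonzero scalar, with Glynn's form $\mathrm{He}(C)$. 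I expect this verification to be the main obstacle: it is a long symbolic computation over $\F_2$ with no conceptual shortcut, so in practice one reproduces — or machine-checks — Glynn's calculation.

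Finally I would remark on why the characteristic-$2$ formula cannot simply be $\det H_F$: over $\F_2$ the Hessian matrix $H_F$ is alternating of odd size, hence $\det H_F \equiv 0$, and the argument of Proposition \ref{Proposition:Flex:CharNot2}, which passes between the degree-two term of the Taylor expansion and the quadratic form attached to $H_F$ by dividing by $2$, collapses; relatedly, over $\F_2$ the polar conic $\{D_P F = 0\}$ is automatically singular at $P$ (its polar bilinear form at $P$ is $2\ell_P = 0$), so ``degeneracy of the polar conic'' is no longer the right criterion and a genuinely different covariant cubic is required. As a consistency check, one can verify directly from Glynn's equation that $\mathrm{He}(C)$ meets $C$ transversally in exactly $9$ points, in agreement with Lemma \ref{Lemma:Flex}.
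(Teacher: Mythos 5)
Your reduction of the flex condition to a local criterion is correct, and it is a genuinely nice characteristic-free variant of the argument the paper uses for the $\chara k\neq 2$ case: the division-free Taylor expansion $F(\lambda P+\mu Q)=\lambda^3F(P)+\lambda^2\mu\,D_QF(P)+\lambda\mu^2\,D_PF(Q)+\mu^3F(Q)$ is valid over any ring, the cross term of the restriction of the polar quadric $D_PF$ to $\ell_P$ is $2\,D_QF(P)$, which indeed vanishes in characteristic $2$, and the irreducibility of the smooth cubic rules out the degenerate case; so ``$P$ is a flex iff the first polar conic through $P$ contains $\ell_P$'' is established. The problem is what comes next: the proposition is not the abstract statement that the flex locus is cut out by some covariant cubic, but the statement that it is cut out by the specific displayed cubic $\mathrm{He}(C)$ with those ten coefficients in $a,\dots,j$. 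That identification is exactly the point you defer (``the main obstacle \dots in practice one reproduces --- or machine-checks --- Glynn's calculation''), so as written the proof has a genuine gap at its decisive step. There is also a smaller unaddressed issue in the elimination itself: to turn ``$D_PF(Q)=0$ for $Q\in\ell_P\setminus\{P\}$'' into a single polynomial in $[X_0:Y_0:Z_0]$ you must choose $Q$ rationally in terms of $P$ (or otherwise eliminate $Q$), which introduces denominators and case divisions that have to be cleared before one can speak of ``the'' cubic obtained by reducing modulo $F$.

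The paper closes this gap not by computing but by citing Glynn precisely: when $\Jac(C)$ is ordinary ($j\neq 0$) the displayed $\mathrm{He}(C)$ is Glynn's curve $\mathcal{C}((A''+|A|A)/a,\,a^3+|A|)$ from the paragraph preceding \cite[Theorem 3.14]{Glynn}, and when $\Jac(C)$ is supersingular ($j=0$) it is the curve $\mathcal{C}(ABA,\,|A|)$ of \cite[Theorem 3.14]{Glynn}; note in particular that matching your output with Glynn's requires this case split, which your sketch never mentions. So either carry the elimination through (by hand or by an explicit, recorded machine computation) and exhibit that the result is the displayed equation up to a nonzero scalar, or do what the paper does and quote Glynn's theorem together with the exact dictionary between the displayed coefficients and his notation; as it stands, your argument proves a correct criterion but not the stated proposition. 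Your closing remarks (vanishing of $\det H_F$ because the Hessian matrix is alternating of odd size, and the expected $9$ transversal intersection points) are consistent with the paper, but the latter is again asserted rather than checked and is not needed for the statement.
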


\begin{proof}
If $\Jac(C)$ is ordinary (i.e.,\ $j \neq 0$),
$\mathrm{He}(C)$ is the curve
\[ \mathcal{C}((A'' + |A|A)/a,\,a^3 + |A|) \]
in the paragraph preceding \cite[Theorem 3.14]{Glynn}.
If $\Jac(C)$ is supersingular (i.e.,\ $j = 0$),
$\mathrm{He}(C)$ is the curve
$\mathcal{C}(ABA,\,|A|)$ in \cite[Theorem 3.14]{Glynn}.
\end{proof}

\subsection{Smooth cubic curves defined by Hesse's normal forms}

Here we give some results on Hesse's normal forms.

\begin{prop}
\label{Proposition:HesseNormalForm}
Let $K$ be an algebraically closed field,
and $C \subset \PP^2$ a smooth cubic curve over $K$.
\begin{enumerate}
\item
Assume that one of the following conditions is satisfied:
\begin{itemize}
\item $\chara K \neq 3$, or
\item $\chara K = 3$ and $\Jac(C)$ is ordinary.
\end{itemize}
Then, under a linear change of variables,
the smooth cubic curve $C$ is mapped into a cubic curve defined by
\[ X^3 + Y^3 + Z^3 + \lambda XYZ = 0 \]
for some $\lambda \in K$.
(It is called \emph{Hesse's normal form}.)

\item 
Assume that $\chara K = 3$ and $\Jac(C)$ is supersingular.
Then the the smooth cubic curve $C$ cannot be mapped into
a cubic curve defined by Hesse's normal form by any linear change of variables.
\end{enumerate}
\end{prop}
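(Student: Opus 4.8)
The plan is to treat the two parts separately; part (2) is immediate from Remark \ref{Remark:HasseInvariant}. A linear change of variables is an isomorphism of plane cubics, hence preserves the isomorphism class of the Jacobian, so for (2) it suffices to show that no \emph{smooth} cubic in Hesse's normal form has supersingular Jacobian when $\chara k = 3$. For $F = X^3 + Y^3 + Z^3 + \lambda XYZ$, the only monomial of $F$ whose square is divisible by $(XYZ)^2$ is $\lambda XYZ$, so the coefficient of $(XYZ)^2$ in $F^2$ equals $\lambda^2$; by Remark \ref{Remark:HasseInvariant}, $\Jac$ is supersingular precisely when $\lambda = 0$. But $X^3 + Y^3 + Z^3 = (X+Y+Z)^3$ in characteristic $3$, which is not smooth. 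Hence a supersingular smooth cubic in characteristic $3$ is never projectively equivalent to a cubic in Hesse's normal form, proving (2).

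For part (1), write $E := \Jac(C)$; since $k$ is algebraically closed, $C$ has a flex (Proposition \ref{Proposition:Flex:AnyChar}), so $C \cong E$, and we fix a flex $O$, taken as the origin of the group law. The tangent line at $O$ cuts out $3O$, so $C \hookrightarrow \PP^2$ is the embedding attached to $|\O_E(3O)|$, and the flexes are the points of $E[3] \subset E = C$. Suppose first $\chara k \neq 3$, so $E[3]$ has order $9$. Translation by $a \in E[3]$ preserves $|\O_E(3O)|$ (because $3a \sim 0$), so $E[3]$ acts on $\PP^2 = \PP(H^0(E,\O_E(3O))^{\vee})$ by projective transformations fixing $C$. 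The preimage of $E[3] \subset \PGL_3$ in $\GL_3$ is a theta (Heisenberg) group $\mathcal{G}$, a central extension of $E[3]$ whose finite part has order $27$ with centre $\mu_3$, and $H^0(E,\O_E(3O)) \cong k^3$ is its unique $3$-dimensional irreducible representation. The multiplication map $\mathrm{Sym}^3 H^0(E,\O_E(3O)) \to H^0(E,\O_E(9O))$ is $\mathcal{G}$-equivariant and surjective (projective normality of elliptic curves of degree $3$), its kernel is the line spanned by the equation of $C$, and $\mu_3$ acts trivially on the target, so both sides are representations of $E[3]$. Since $\O_E(9O) \cong [3]^{*}\O_E(O)$ (with $[3]$ multiplication by $3$), the projection formula identifies $H^0(E,\O_E(9O))$ with the regular representation of $E[3]$; comparing with the $10$-dimensional $\mathrm{Sym}^3 H^0(E,\O_E(3O))$ shows that the equation of $C$ spans the eigenline for the one character occurring with multiplicity $2$. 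Picking coordinates $X,Y,Z$ that identify $\mathcal{G}$ with the standard Heisenberg group, one checks by hand that this doubled character is the trivial one and that its $\mathrm{Sym}^3$-eigenspace is exactly the Hesse pencil $\langle X^3+Y^3+Z^3,\ XYZ\rangle$. Therefore $C$ is defined by $a(X^3+Y^3+Z^3) + bXYZ = 0$; smoothness forces $a \neq 0$ (as $XYZ = 0$ is singular), and rescaling yields Hesse's normal form. (Alternatively one can argue that the nine flexes with their collinearity relations form the affine plane $\A^2(\F_3)$ and move a ``parallel class'' of flex-lines to the coordinate triangle, but then one must separately rule out that such a triple of lines is concurrent; the theta-group argument sidesteps this.)

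Now suppose $\chara k = 3$ with $\Jac(C)$ ordinary, so $C$ has exactly three flexes $A, B, D$ (Lemma \ref{Lemma:Flex}(4)) with three distinct flex-lines $\ell_A, \ell_B, \ell_D$, each meeting $C$ only at the corresponding flex, with multiplicity $3$. These are not concurrent: if they all passed through a point $Q$, then after moving $Q$ to $[0:0:1]$ and $\ell_A,\ell_B,\ell_D$ to $\{Y=0\},\{X=0\},\{X-Y=0\}$, each of $F(X,0,Z)$, $F(0,Y,Z)$, $F(X,X,Z)$ would be the cube of a linear form; since cubing annihilates cross terms in characteristic $3$, comparing coefficients forces the coefficient of $XYZ$ in $F$ to vanish, and then (the remaining relevant coefficients already being zero) Remark \ref{Remark:HasseInvariant} makes $\Jac(C)$ supersingular, a contradiction. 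So $\ell_A,\ell_B,\ell_D$ form a triangle; move it to the coordinate triangle. Then each of $F(X,Y,0)$, $F(0,Y,Z)$, $F(X,0,Z)$ is a cube of a linear form, so every ``square times linear'' monomial of $F$ vanishes and $F = c_1X^3 + c_2Y^3 + c_3Z^3 + c_4XYZ$; here $c_4 \neq 0$ by ordinarity (Remark \ref{Remark:HasseInvariant}) and $c_1,c_2,c_3 \neq 0$ (else a coordinate line would lie on $C$, or two of the flexes would coincide). Rescaling the variables by cube roots of $c_1,c_2,c_3$ puts $C$ into Hesse's normal form. The computational heart of (1) is thus the two ``checks by hand'': that the flex data, placed in standard coordinates, forces the equation into the Hesse pencil. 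In characteristic $3$ this is elementary because taking cubes kills cross terms — the only subtlety being the non-concurrency of the flex-lines, handled via the ordinarity hypothesis — while in characteristic $\neq 3$ the work is concentrated in the Heisenberg representation theory, which is precisely where $\chara k \neq 3$ is needed, in accordance with the obstruction of part (2).
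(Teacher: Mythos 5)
Your proposal is correct, but it takes a genuinely different route from the paper's in the main case. For (1) with $\chara k \neq 3$, the paper follows the synthetic argument of Artebani--Dolgachev: pick two flexes $P_0,P_1$, let $L$ be the line through them and $P_2$ the third intersection point, compare the divisors cut out on $C$ by $G_L^3$ and by the product $G_{P_0}G_{P_1}G_{P_2}$ of the three tangent lines to get $F = aG_L^3 + bG_{P_0}G_{P_1}G_{P_2}$, and then normalize coordinates, with a separate branch when the three tangent lines are concurrent (which forces the Fermat cubic). Your Heisenberg/theta-group argument instead pins down the equation of $C$ as the kernel line of $\mathrm{Sym}^3 H^0(E,\O_E(3O)) \to H^0(E,\O_E(9O))$, identifies the target with the regular representation of $E[3]$ (up to a character twist you elide, which is harmless since the regular representation is twist-stable), and computes that the unique doubled character eigenspace in the source is exactly the Hesse pencil; this uses heavier standard machinery (projective normality, Stone--von Neumann uniqueness of the Schr\"odinger representation) but it cleanly explains where $\chara k \neq 3$ enters and avoids the concurrency case split. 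In characteristic $3$ ordinary your argument is close in spirit to the paper's (normalize the three flex tangent lines to the coordinate triangle and use that cubing kills cross terms), except that you rule out concurrency by a direct coefficient and Hasse-invariant computation, while the paper disposes of the concurrent configuration by observing it yields the Fermat cubic, impossible in characteristic $3$ by smoothness. For (2) the paper exhibits three distinct flexes on any smooth Hesse cubic via Proposition \ref{Proposition:Flex:CharNot2} and invokes Lemma \ref{Lemma:Flex} (4), whereas you compute the Hasse invariant $\lambda^2$ via Remark \ref{Remark:HasseInvariant} and note $X^3+Y^3+Z^3=(X+Y+Z)^3$; both work, and yours is arguably more elementary. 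One small imprecision: your parenthetical reason for $c_1,c_2,c_3 \neq 0$ (``a coordinate line would lie on $C$'') is not the right justification; the correct one is that if, say, $c_1=0$ then $[1:0:0]$ lies on $C$ and all three partial derivatives of $c_2Y^3+c_3Z^3+c_4XYZ$ vanish there, so $C$ would be singular --- the claim itself is true and the fix is immediate.
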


\begin{proof}
(1) Essentially the same argument as in the proof of \cite[Lemma 2.1]{ArtebaniDolgachev} works.
Here we briefly give a sketch the proof.

Take two distinct flexes $P_0, P_1 \in C(K)$.
(At this point, we use the assumption that
$\Jac(C)$ is ordinary when $\chara K = 3$.
See Lemma \ref{Lemma:Flex} (3),(4).) 

Let $L \subset \PP^2$ be the line through $P_0$ and $P_1$.
The third intersection $P_2$ of $C$ and $L$ is different from $P_0$ and $P_1$.
Let $G_L \in K[X,Y,Z]$ be the linear form defining $L$.
Let $G_{P_0}, G_{P_1}, G_{P_2} \in K[X,Y,Z]$
be linear forms defining
the tangent lines of $C$ at $P_0,P_1, P_2$, respectively.
Since the divisors on $C$ cut out by $G_L^3$ and $G_{P_0}G_{P_1}G_{P_2}$ are the same, we have
$a G_L^3 + b G_{P_0}G_{P_1}G_{P_2} = F$
for some $a,b \in k^{\times}$.

If $\chara K \neq 3$ and 
the three tangent lines of $C$ at $P_0$, $P_1$, and $P_2$ do not pass through
the same point in $\PP^2$,
as in the proof of \cite[Lemma 2.1]{ArtebaniDolgachev}, we may put
\[
G_L = Z, \quad \text{and} \quad
G_{P_0}G_{P_1}G_{P_2} = X^3 + Y^3 - Z^3 + 3XYZ.
\]
Changing the coordinate $Z$ again, we obtain Hesse's normal form.
If the three tangent lines of $C$ at $P_0$, $P_1$, and $P_2$ pass through
the same point in $\PP^2$,
by changing the coordinates appropriately,
we may assume that
\[ G_L = Z, \quad
   G_{P_0}G_{P_1}G_{P_2} = X^3 + Y^3,
   \quad \text{and} \quad
   a = b = 1. \]
Then, $C$ is the Fermat cubic curve.
In particular, $C$ is written as Hesse's normal form.

Finally, if $\chara K = 3$ and $\Jac(C)$ is ordinary,
we may assume
$G_{P_0} = X$, $G_{P_1} = Y$, and $G_{P_2} = Z$.
Again, by changing the coordinates, we may assume that
$G_L = X+Y+Z$.
Then we obtain Hesse's normal form.

(2)
By Proposition \ref{Proposition:Flex:CharNot2},
a cubic curve defined by Hesse's normal form has
three distinct flexes,
$[1:-1:0]$, $[0:1:-1]$, and $[1:0:-1]$.
By Lemma \ref{Lemma:Flex:Characteristic3} (2),
this never occurs if
$\chara K = 3$ and $\Jac(C)$ is supersingular.
\end{proof}

Let $K$ be an algebraically closed field with
$\chara K \neq 2$.
Let $C \subset \PP^2$ be a smooth cubic curve over $K$,
and $\mathrm{He}(C) \subset \PP^2$ the Hessian curve of $C$.
Let $F_3 \subset \PP^2$ be the Fermat cubic curve defined by $X^3 + Y^3 + Z^3 = 0$.

\begin{prop}
\label{Proposition:HesseSmoothness}
\begin{enumerate}
\item If $\chara K \neq 2, 3$ and $C$ is not isomorphic to $F_3$,
then the Hessian curve $\mathrm{He}(C)$ is smooth over $K$.

\item If $\chara K \neq 2, 3$
and $C$ is isomorphic to $F_3$,
then the Hessian curve $\mathrm{He}(C)$
is the union of three distinct lines in $\PP^2$.

\item If $\chara K = 3$ and $\Jac(C)$ is ordinary,
then the Hessian curve $\mathrm{He}(C)$
is the union of three distinct lines in $\PP^2$.

\item If $\chara K = 3$ and $\Jac(C)$ is supersingular,
then the Hessian curve $\mathrm{He}(C)$ is
a non-reduced cubic curve defined by the third power of a linear form.
\end{enumerate}
\end{prop}

\begin{proof}
First we treat the case when $\chara K \neq 2,3$. 
Since Hessian of a cubic curve $C$ is a covariant,
by Proposition \ref{Proposition:HesseNormalForm},
we may assume $C$ is defined by a Hesse normal form
$X^3 + Y^3 + Z^3 + \lambda XYZ = 0$.
It is smooth if and only if $\lambda^3 \neq -27$
(regardless of $\chara K$).
Its Hessian form is
\[ -6\lambda^2(X^3 + Y^3 + Z^3) + (216 + 2\lambda^3)XYZ. \]
Thus, $\mathrm{He}(C)$ is singular
if and only if
\[
  -6\lambda^2 = 0
  \quad \text{or} \quad
  (216 + 2\lambda^3)^3 = -27 \cdot (-6\lambda^2)^3.
\]
This occurs precisely when
$\lambda \in \{ 0,\, 3\zeta_3^i,\, 6 \zeta_3^i \}$,
where $\zeta_3$ is a primitive third root of unity, and $0 \le i \le 2$.
The first case $\lambda = 0$ is the Fermat cubic curve $F_3$,
the second case $\lambda = 3\zeta_3^i$ is singular,
and the third case $\lambda = 6\zeta_3^i$ is isomorphic to $F_3$ via
\[
  x \mapsto (x + y + z)/3, \quad
  y \mapsto (x + \zeta_3y + \zeta_3^2 z)/3, \quad
  z \mapsto (x + \zeta_3^2y + \zeta_3z)/3.
\]
Since the singular curves defined by a Hesse normal form is
union of three distinct lines, the assertions (1) and (2) are proved.

We shall consider the case $\chara K = 3$.
If $\Jac(C)$ is ordinary,
we may assume that $C$ is defined by a Hesse normal form
by Proposition \ref{Proposition:HesseNormalForm} (1).
The Hessian is $2\lambda^3 XYZ$.
Since $C$ is smooth, we have $\lambda \neq 0$, and the assertion follows.
If $\mathrm{Jac}(C)$ is supersingular,
$C$ is defined by
$Y^2Z - X^3 - XZ^2 - Z^3 = 0$
by a linear change of coordinates.
The Hessian of this curve is $Z^3 = 0$.
\end{proof}

\begin{rem}
The above proof does not work in characteristic $2$
because the Hessian in characteristic $2$ defined by Glynn
is not a covariant.
(See \cite{Glynn} for details.)
\end{rem}

\subsection*{Acknowledgements}

The authors would like to thank Jean-Pierre Serre
for providing a proof of group theoretic results needed to show the Hasse principle for flexes.
The proof of Proposition \ref{Proposition:AffineTransformation}
in the current version is due to him.
He also suggested to study the Hasse principle allowing exceptional sets of positive density.
(See Remark \ref{Remark:Serre}.)

The authors would also like to thank anonymous referees
for giving helpful comments,
giving the reference \cite{Milne}.
The authors corrected and improved the proof of
Proposition \ref{Proposition:AffineTransformation2}
according to the comments of the referees.

The work of Y.\ I.\ was supported by
JSPS KAKENHI Grant Number 16K17572 and 21K18577.
The work of T.\ I.\ was supported by JSPS KAKENHI Grant Number 20674001,
26800013, 21H00973, and 21K18577.
This work was supported by the Sumitomo Foundation FY2018 Grant
for Basic Science Research Projects (Grant Number 180044).

In the course of obtaining the main results of this paper,
experimental calculations of subgroups of affine transformation groups
by \texttt{GAP} (version 4.8.10) \cite{GAP} were very helpful.

\end{document}